\documentclass[11pt,dvips]{amsart}  
\usepackage{amssymb,amsmath,amsfonts}  
\usepackage[dvips]{graphicx,xcolor}
\usepackage{cite}
\allowdisplaybreaks[4]
\definecolor{pgray}{gray}{0.4}

\newtheorem{theorem}{Theorem}[section]
\newtheorem{corollary}[theorem]{Corollary}
\newtheorem{proposition}[theorem]{Proposition}
\newtheorem{lemma}[theorem]{Lemma}

\newtheorem{remark}[theorem]{Remark}
\numberwithin{equation}{section}

\title{\mbox{}}

\begin{document}
\begin{center}
{\bf \LARGE{
	Nonlinear profiles on solutions to the surface quasi-geostrophic equation
}}\\
\vspace{5mm}
{\large Masakazu Yamamoto}\\Graduate School of Science and Technology, Gunma University\\Tenjin-cho 1-5-1, Kiryu 376-8515, Japan \\E-mail : \texttt{mk-yamamoto@gunma-u.ac.jp}
\end{center}
\maketitle
\vspace{-15mm}
%
\begin{abstract}
The quasi-geostrophic equation is well known as a model for predicting potential temperature around low-pressure systems in high-latitude regions.
When diffusion effects are added to this equation, it serves as a model for potential temperature at the sea surface.
In either case, the nonlinear term represents the effect of the Coriolis force.
This paper yields nonlinear profile in solutions to the surface quasi-geostrophic equation.
This profile is determined uniquely from the perspective of large-time behavior of solutions.
Components of solutions are arranged sequentially from slow to fast decay based on the parabolic scale.
Then such an expansion is determined uniquely.
Since the equation is subcritical in the context of large-time effects, the main components exhibit linear features.
Nonlinear characteristics appear in components that have a smaller effect on solutions.
From the perspective of the correlation between spacetime variables, nonlinear profiles exhibit characteristics that are distinctly different from those of linear profiles.
Moreover, the nonlinear effects of this equation are expected to be weaker than those of other equations with the same scale due to their spatial structure.
The main theorem publishes minute, but unique rotational flow arising from the Coriolis force.

\vspace{2mm}

\paragraph{\textbf{Keywords.}}
Quasi-geostrophic equation,~
Large-time behavior,~
Renormalization,~
Nonlinear profiles,\\
Coriolis force,~
Atmospheric models,~
Oceanic models
\end{abstract}
\section{Introduction}
We treat the Cauchy problem of the following surface quasi-geostrophic equation:
\begin{equation}\label{qg}
\left\{
\begin{array}{lr}
	\partial_t \theta + \mathcal{R}^\bot\theta\cdot\nabla \theta = \Delta \theta,
	&
	t > 0,~ x \in \mathbb{R}^2,\\
	\theta (0,x) = \theta_0 (x),
	&
	x \in \mathbb{R}^2,
\end{array}
\right.
\end{equation}
where $\partial_t = \partial/\partial t$ and $\mathcal{R}^\bot = (-\mathcal{R}_2, \mathcal{R}_1)$ for $\mathcal{R}_j \varphi = \partial_j (-\Delta)^{-1/2} \varphi = \mathcal{F}^{-1} [i\xi_j\mathcal{F} [\varphi]/|\xi|]$ is the orthogonal Riesz transform.
Originally, the quasi-geostrophic equation is a nonlinear hyperbolic-type equation describing the geostrophic wind caused by Coriolis force around a low-pressure.
One with dissipation was introduced by Constantin, Majda and Tabak \cite{Cnstntn-Mjd-Tbk} as a model for the distribution of sea-surface potential temperature.
Namely, $\theta = \theta (t,x) \in \mathbb{R}$ stands for the thermal potential.
Several works treated this equation with the fractional laplacian $(-\Delta)^{\alpha/2}$ instead of $-\Delta$ under the abstract theory since $\alpha = 1$ is critical and the quasi-geostrophic equation resemble the three-dimensional incompressible Navier--Stokes equations from the perspective of their scale structure (see \cite{Cnstntn-Crdb-W,IwbchOkzk,Kslv-Nzrv-Vlbrg,Mur} and references there in).
However, in the context of large-time behavior of solutions, the nonlinear effects tend to be relatively weaker when $\alpha$ is small, and the critical is $\alpha = 3$.
We will discuss technical reasons for focusing on $\alpha = 2$ at the end of this section, but in both senses, our model is easier to work.
Therefore, even if the initial-data $\theta_0$ is large, unique solution $\theta$ exists globally in time and satisfies that
\begin{equation}\label{decayth}
	\| \theta (t) \|_{L^q (\mathbb{R}^2)}
	\le
	C (1+t)^{-\gamma_q}
\end{equation}
and
\begin{equation}\label{asymp0}
	\| \theta (t) - M_0 G(t) \|_{L^q (\mathbb{R}^2)} = o (t^{-\gamma_q})
\end{equation}
as $t \to +\infty$ for $1 \le q \le \infty$ and $\gamma_q = 1-1/q$ when $\theta_0 \in L^1 (\mathbb{R}^2) \cap L^\infty (\mathbb{R}^2)$, where $M_0 = \int_{\mathbb{R}^2} \theta_0 (x) dx = \int_{\mathbb{R}^2} \theta (t,x) dx$ is the conserved quantity and $G(t,x) = (4\pi t)^{-1} e^{-|x|^2/4t}$ is the fundamental solution (see\cite{Schnbk2}).
They are confirmed through Moser--Nash' energy theory and $L^p$-$L^q$ estimates for the heat semigroup.
For the same reason, the weighted estimate
\begin{equation}\label{decay-wt}
	\| |x|^m \theta (t) \|_{L^q (\mathbb{R}^2)} \le Ct^{-\gamma_q} (1+t)^{m/2}
\end{equation}
holds upon the condition $|x|^m \theta_0 \in L^1 (\mathbb{R}^2)$.
The asymptotic estimate \eqref{asymp0} means that the solution converges asymptotically to a radially symmetric function.
On the other hand, since it acts perpendicular to the diffusion, the Coriolis force does not affect the diffusion of a radially symmetric potential.
These facts are crucial for interpreting our main result.
Another important property of the top term $\Theta_0 (t) = M_0 G(t)$ is the parabolic scale that $\lambda^{2} \Theta_0 (\lambda^2 t, \lambda x) = \Theta_0 (t,x)$ for $\lambda > 0$.
Such a profile is determined uniquely.
Indeed, if there is some other profile $\bar\Theta_0$ which has the same scale, then this fulfills that $\| (\Theta_0-\bar\Theta_0) (t) \|_{L^q (\mathbb{R}^2)} = t^{-\gamma_q} \| (\Theta_0-\bar\Theta_0)(1) \|_{L^q (\mathbb{R}^2)}$ for $t > 0$ and $\| (\Theta_0-\bar\Theta_0) (t) \|_{L^q (\mathbb{R}^2)} \le \| (u-\Theta_0) (t) \|_{L^q (\mathbb{R}^2)} +   \| (u-\bar\Theta_0) (t) \|_{L^q (\mathbb{R}^2)} = o (t^{-\gamma_q})$ as $t \to +\infty$.
They contradict whenever $\bar\Theta_0 \neq \Theta_0$.
Generally, including higher-order expansions, asymptotic profiles based on the scales are fixed uniquely.
The unique higher-order expansion was derived in the preceding work as follows.
\begin{proposition}[cf.\cite{Ymmt26}]\label{prop-qg}
Let $\theta_0 \in L^1 (\mathbb{R}^2) \cap L^\infty (\mathbb{R}^2)$ and $x \theta_0 \in L^1 (\mathbb{R}^2)$.
Then the solution $\theta$ of \eqref{qg} fulfills that
\[
	\| (\theta - M_0 G - M_1 \cdot \nabla G)(t) \|_{L^q (\mathbb{R}^2)}
	= o (t^{-\gamma_q-1/2})
\]
as $t \to +\infty$ for $1 \le q \le \infty$ and $\gamma_q = 1-1/q$, where $M_0 = \int_{\mathbb{R}^2} \theta_0 (x) dx \in \mathbb{R}$ and $M_1 = - \int_{\mathbb{R}^2} x \theta_0 (x) dx \in \mathbb{R}^2$.
In addition, if $|x|^2 \theta_0 \in L^1 (\mathbb{R}^2)$, then the left-hand side is estimated by $O (t^{-\gamma_q-1}\log t)$ as $t\to\infty$.
\end{proposition}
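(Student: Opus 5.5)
We argue through the Duhamel representation. Since $\nabla\cdot\mathcal{R}^\bot\theta = 0$, the nonlinearity is in divergence form, and
\[
	\theta(t) = G(t)*\theta_0 - \int_0^t \nabla G(t-s) * \bigl( \theta\, \mathcal{R}^\bot\theta \bigr)(s)\, ds .
\]

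\textbf{Linear part.} We treat this by Taylor expansion of the heat kernel. Write
\[
	G(t,x-y) = G(t,x) - y\cdot\nabla G(t,x) + r(t,x,y).
\]
With $M_0 = \int\theta_0$ and $M_1 = -\int x\theta_0$, this gives $\|G(t)*\theta_0 - M_0G(t) - M_1\cdot\nabla G(t)\|_{L^q} = o(t^{-\gamma_q-1/2})$ when only $x\theta_0\in L^1$. The argument is the standard one: split $\theta_0$ into a compactly supported part and a small tail. Under $|x|^2\theta_0\in L^1$ the remainder $r$ is controlled by $|y|^2|\nabla^2G|$, which yields $O(t^{-\gamma_q-1})$.

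\textbf{Nonlinear part.} The expansion we want contains no nonlinear term at the orders $t^{-\gamma_q}$ and $t^{-\gamma_q-1/2}$, so it suffices to show
\[
	\Bigl\|\int_0^t \nabla G(t-s)*(\theta\mathcal{R}^\bot\theta)(s)\,ds\Bigr\|_{L^q} = O(t^{-\gamma_q-1}\log t).
\]
The $L^p$ bound on $\mathcal{R}^\bot\theta$ follows from \eqref{decayth} for $1<p<\infty$. For the $L^1$ and $L^\infty$ endpoints we instead use the bounds of \eqref{decayth} on $\theta$ in $L^2$ and $L^{r}$ for suitable $r$, combined with Hölder's inequality, so that the Riesz transform is only ever applied on $L^p$ with $1<p<\infty$.

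This gives $\|\theta\mathcal{R}^\bot\theta(s)\|_{L^p}\le C(1+s)^{-1-\gamma_p}$. We then split the time integral:
\begin{itemize}
\item On $[0,t/2]$, we Taylor expand $\nabla G(t-s,x-y)$ in $y$ about $y=0$. The zeroth-order term is $\nabla G(t-s)\cdot\int\theta\mathcal{R}^\bot\theta(s)\,dy$, and its moment integral is bounded by $\int_0^{t/2}(1+s)^{-1}ds$, which decays like $t^{-\gamma_q-1}\log t$.
\item On $[t/2,t]$, we place the derivative on the kernel, $\|\nabla G(t-s)\|_{L^1}\le C(t-s)^{-1/2}$, and use the $L^q$ decay of the product, $(1+s)^{-1-\gamma_q}$. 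This gives $t^{1/2}\cdot t^{-1-\gamma_q}$, which is too weak.
\end{itemize}

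\textbf{Main obstacle.} The $[t/2,t]$ estimate is the hard step. To fix it, we replace $\theta$ in the nonlinearity by $M_0G$ plus the error from \eqref{asymp0}. For the leading product we exploit the cancellation that $\mathcal{R}^\bot G$ is orthogonal to radial gradients: since $G$ is radial,
\[
	\nabla\cdot\bigl(G\,\mathcal{R}^\bot G\bigr) = \mathcal{R}^\bot G\cdot\nabla G = 0 .
\]
The principal nonlinear contribution therefore vanishes identically, and only cross terms remain. These involve $\theta-M_0G = o(t^{-\gamma_q})$, or $O(t^{-\gamma_q-1/2})$ using the linear expansion bootstrapped, so they gain an extra factor $t^{-1/2}$. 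Iterating once more, with the weighted bound \eqref{decay-wt} controlling the moments, we obtain $o(t^{-\gamma_q-1/2})$ in general and $O(t^{-\gamma_q-1}\log t)$ under the second-moment assumption. The logarithm comes from the $(1+s)^{-1}$ integrand on $[0,t/2]$.
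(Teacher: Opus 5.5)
Your frame (Duhamel formula, Taylor expansion of the kernel, and the radial cancellation $\nabla\cdot(\Theta_0\mathcal{R}^\bot\Theta_0)=0$) is the right one. The paper quotes this statement from earlier work, but the same mechanism appears in \eqref{begin} and in the proof of Proposition~\ref{prop-asympmid-wt}. Your argument nevertheless breaks on $[0,t/2]$, the half you treat as easy.

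Your bound for the zeroth-order Taylor term is off by a factor $t^{1/2}$. With $\|\nabla G(t-s)\|_{L^q}\simeq t^{-\gamma_q-1/2}$ and $|\int\theta\mathcal{R}^\bot\theta\,dy|\le C(1+s)^{-1}$, it would contribute $t^{-\gamma_q-1/2}\log t$, which would ruin even the first assertion. It is harmless only because it vanishes identically: $\int\theta\mathcal{R}^\bot\theta\,dy=0$ by skew-adjointness of $\mathcal{R}^\bot$ (see \eqref{phantom2}), a fact you never invoke.

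The term that actually carries the difficulty is $\nabla^2G$ paired with $\int_0^{t/2}\int y\,(\theta\mathcal{R}^\bot\theta)(s,y)\,dy\,ds$, and you do not treat it at all. By scaling, the first moment of $(\Theta_0\mathcal{R}^\bot\Theta_0)(s)$ is $s^{-1/2}$ times a fixed nonzero antisymmetric matrix. So \eqref{decay-wt} only gives $t^{-\gamma_q-1}\int_0^{t/2}s^{-1/2}ds\simeq t^{-\gamma_q-1/2}$, exactly the borderline order. The radial cancellation is needed precisely here: the antisymmetric moment is killed by contraction with the symmetric $\nabla^2G$. Hence $\Theta_0\mathcal{R}^\bot\Theta_0$ must be subtracted on all of $(0,t)$, as in \eqref{begin}, not only on $[t/2,t]$. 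Once that is done, the $[t/2,t]$ piece you call the main obstacle is routine.

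Write $E=\theta\mathcal{R}^\bot\theta-\Theta_0\mathcal{R}^\bot\Theta_0$. Two further ingredients are missing behind ``iterating once more''.

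\emph{(i) An integrable rate.} \eqref{asymp0} yields only $\|E(s)\|_{L^1}=o(s^{-1})$, which is not integrable. You first need the rate $\|(\theta-\Theta_0)(s)\|_{L^q}=O(s^{-\gamma_q-1/2})$. This does follow from three pieces: zero mean together with the crude first-moment bound above on $[0,t/2]$, your estimate on $[t/2,t]$, and the linear part. Then $E$ is integrable on $(1,\infty)\times\mathbb{R}^2$ with $\int E\,dy=0$. Dominated convergence applied to $\nabla G(t,x-y)-\nabla G(t,x)$, plus routine bounds for the time difference and for $s\in(0,1)$, gives the $o(t^{-\gamma_q-1/2})$ assertion.

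\emph{(ii) The source of the logarithm.} For the logarithmic bound you need $|\int yE(s,y)\,dy|\le Cs^{-1}$ for $s\ge 1$. Integrating this against $\nabla^2G(t)$ is the true origin of the $\log t$, not the zeroth-order term. Proving it requires weighted information that \eqref{decay-wt} for $\theta$ alone does not provide: weighted bounds on $\theta-\Theta_0$ and/or on $|x|\mathcal{R}^\bot\theta$. Since $\mathcal{R}^\bot\theta$ decays only like $|x|^{-2}$, the function $|x|\mathcal{R}^\bot\theta$ lies in $L^p$ only for $p>2$. It is controlled through the H\"ormander--Mikhlin bound of Lemma~\ref{lemHM}, not through the unweighted $L^p$-boundedness of $\mathcal{R}^\bot$ to which you restrict yourself.
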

The Escobedo--Zuazua theory \cite{Escbd-ZZ} together with the renormalization, which provides a measure of nonlinearity, predicted that there are some $K_1$ and $J_1$ satisfying $\| (K_1,J_1) (t) \|_{L^q(\mathbb{R}^2)} = t^{-\gamma_q-1/2} \| (K_1,$ $J_1) (1) \|_{L^q(\mathbb{R}^2)}$ for $t>0$ and $1\le q\le \infty$,
and $\theta \sim M_0 G + K_1 (t) \log t + M_1 \cdot \nabla G + J_1$ as $t \to +\infty$.
Particularly, it is expected that nonlinear characteristics will be reflected in $J_1$.
Such a fact can be observed in several equations that share the same scale structure as our model.
For the details, see the instructions for \eqref{bg} and \eqref{qgp} below.
It was considered that solutions to \eqref{qg} behave in a similar manner.
In fact, $K_1$ is a pseudo-logarithm and the nonlinearity $J_1$ never be seen.
For the reason why these components do not appear, contrary to expectations, see the proof of Proposition \ref{prop-asympmid-wt}.
Here, if we put $\Theta_1 (t) = M_1 \cdot \nabla G(t)$, then $\lambda^{2+1} \Theta_1 (\lambda^2 t, \lambda x) = \Theta_1 (t,x)$ for $\lambda > 0$ is fulfilled.
These $\Theta$s in Proposition \ref{prop-qg} are linear profiles of the solution since $\partial_t \Theta_m = \Delta \Theta_m$ are satisfied.
Needless to say, our equation is not a linear problem.
The effect of nonlinearity on the solution is small but certainly present.
In fact, as stated in the following main theorem, nonlinear characteristics appear in components two orders of magnitude smaller than the above-mentioned estimate.
To state it, we firstly introduce the lower-order profiles as
\[
	\Theta_0 (t) = M_0 G(t),\quad
	\Theta_1 (t) = M_1 \cdot \nabla G(t)
	\quad\text{and}\quad
	\Theta_2 (t) = \nabla \cdot (M_2 \nabla) G(t)
\]
for $M_0 = \int_{\mathbb{R}^2} \theta_0 (x) dx,~ M_1 = (M_1^j)$ for $M_1^j = - \int_{\mathbb{R}^2} x_j \theta_0 (x) dx$, and $M_2 = (M_2^{ij})$ for
\[
\begin{aligned}
	&M_2^{11} = \frac12 \int_{\mathbb{R}^2} x_1^2 \theta_0 (x) dx - \int_0^\infty \int_{\mathbb{R}^2} x_1 (\theta\mathcal{R}_2\theta- \Theta_0 \mathcal{R}_2\Theta_0) (t,x) dxdt,\\
	&M_2^{12} = \int_{\mathbb{R}^2} x_1 x_2 \theta_0 (x) dx + \int_0^\infty \int_{\mathbb{R}^2} x_1 (\theta\mathcal{R}_1\theta- \Theta_0 \mathcal{R}_1\Theta_0) (t,x) dxdt,\\
	&M_2^{21} = \int_{\mathbb{R}^2} x_2 x_1 \theta_0 (x) dx - \int_0^\infty \int_{\mathbb{R}^2} x_2 (\theta\mathcal{R}_2\theta- \Theta_0 \mathcal{R}_2\Theta_0) (t,x) dxdt,\\
	&M_2^{22} = \frac12 \int_{\mathbb{R}^2} x_2^2 \theta_0 (x) dx + \int_0^\infty \int_{\mathbb{R}^2} x_2 (\theta\mathcal{R}_1\theta- \Theta_0 \mathcal{R}_1\Theta_0) (t,x) dxdt.
\end{aligned}
\]
The higher-order profile of the linear dissipation is
\[
\begin{aligned}
	&\Theta_3 (t) = -\sum_{|\alpha|=3} \frac{\nabla^\alpha G(t)}{\alpha!} \int_{\mathbb{R}^2} y^\alpha \theta_0 (y) dy\\
	& - \sum_{2l+|\beta|=2} \frac{\partial_t^l \nabla^\beta \nabla G(t)}{\beta!} \cdot \int_0^\infty \int_{\mathbb{R}^2} (-s)^l y^\beta \left( \theta\mathcal{R}^\bot\theta - \Theta_0\mathcal{R}^\bot\Theta_0 - \Theta_0\mathcal{R}^\bot\Theta_1 - \Theta_1\mathcal{R}^\bot\Theta_0 \right) (s,y) dyds.
\end{aligned}
\]
They still fulfill $\partial_t \Theta_m = \Delta \Theta_m$.
The top term of nonlinearity is decided uniquely as
\begin{equation}\label{J3pre}
	J_3 (t) = J_3^{\mathrm{L}} (t) + J_3^{\mathrm{H}} (t)
\end{equation}
for
\[
\begin{aligned}
	&J_3^{\mathrm{L}} (t) = - \frac{\sqrt{2\pi} \left( 2M_0 (M_2^{22} - M_2^{11}) + (M_1^1)^2 - (M_1^2)^2 \right)}{128\pi} t^{-1/2} \partial_1 \partial_2 G(t)\\
	&- \frac{\sqrt{2\pi}\left( M_0 (M_2^{12} + M_2^{21}) - M_1^1 M_1^2\right)}{128\pi} t^{-1/2} (\partial_1^2 - \partial_2^2) G(t)
\end{aligned}
\]
and
\[
\begin{aligned}
	&J_3^{\mathrm{H}} (t)
	=\frac{\sqrt{\pi} \left( 2M_0 (M_2^{22} - M_2^{11}) + (M_1^1)^2 - (M_1^2)^2\right)}{16\pi^2}\\
		&\qquad \times 
        \int_0^t \int_0^\infty \int_0^\infty
		\lambda^{-3/2} e^{-\frac1{4\lambda}} \frac{\sigma^2\lambda}{(2s+\sigma^2\lambda)^3}\partial_1 \partial_2 \left( G \left( t-\frac{s^2}{2s+\sigma^2\lambda} \right) - G (t) \right) d\lambda d\sigma ds\\
	&+ \frac{\sqrt{\pi} \left( M_0 (M_2^{12} + M_2^{21}) - M_1^1 M_1^2\right)}{16\pi^2}\\
		&\qquad \times \int_0^t \int_0^\infty \int_0^\infty
		\lambda^{-3/2} e^{-\frac1{4\lambda}} \frac{\sigma^2\lambda}{(2s+\sigma^2\lambda)^3}(\partial_1^2 - \partial_2^2) \left( G \left( t-\frac{s^2}{2s+\sigma^2\lambda} \right) - G (t) \right) d\lambda d\sigma ds.
\end{aligned}
\]
Here, $J_3^{\mathrm{H}}$ is the component with a higher frequency than $J_3^{\mathrm{L}}$.
They represent the minute but largest and complex nonlinear flow contained within the solution.
Namely, $J_3$ reflects the effects of rotation caused by the Coriolis force.
For the original form and the origin of this profile, see \eqref{J3L} and \eqref{J3H}, and Remark \ref{rem33}, respectively.
Using these profiles, we establish our main results as follows.
\begin{theorem}\label{thm}
	Let $\theta_0 \in L^1 (\mathbb{R}^2) \cap L^\infty (\mathbb{R}^2)$ and $|x|^3 \theta_0 \in L^1 (\mathbb{R}^2)$.
	Then the solution $\theta$ of \eqref{qg} satisfies that
	\[
		\| (\theta - \Theta_0 - \Theta_1 - \Theta_2 - \Theta_3 - J_3) (t) \|_{L^q (\mathbb{R}^2)}
		= o (t^{-\gamma_q-3/2})
	\]
	as $t \to +\infty$ for $1 \le q \le \infty$ and $\gamma_q = 1-1/q$, where $\Theta_m$ and $J_3$ are defined as above.
	In addition, if $|x|^4 \theta_0 \in L^1 (\mathbb{R}^2)$, then the left-hand side is estimated by $O(t^{-\gamma_q-2} \log t)$ as $t \to +\infty$.
\end{theorem}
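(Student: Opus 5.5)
We start from the Duhamel formula. Since $\nabla\cdot\mathcal{R}^\bot\theta=0$, the nonlinearity can be written in divergence form:
\[
	\theta(t)=G(t)*\theta_0-\int_0^t\nabla G(t-s)*\bigl(\theta\mathcal{R}^\bot\theta\bigr)(s)\,ds .
\]
We expand the linear part $G(t)*\theta_0$ by Taylor expansion of $G(t,x-y)$ in $y$ up to third order. With $|x|^3\theta_0\in L^1$ this produces the moment terms in $\Theta_0,\dots,\Theta_3$, with remainder $o(t^{-\gamma_q-3/2})$; with $|x|^4\theta_0\in L^1$ the remainder is $O(t^{-\gamma_q-2})$.

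For the Duhamel term we split $\theta\mathcal{R}^\bot\theta$ as
\[
	\theta\mathcal{R}^\bot\theta=\Theta_0\mathcal{R}^\bot\Theta_0+\bigl(\Theta_0\mathcal{R}^\bot\Theta_1+\Theta_1\mathcal{R}^\bot\Theta_0\bigr)+\bigl(\Theta_0\mathcal{R}^\bot\Theta_2+\Theta_2\mathcal{R}^\bot\Theta_0+\Theta_1\mathcal{R}^\bot\Theta_1\bigr)+r ,
\]
where the remainder $r$ is controlled by Proposition~\ref{prop-qg} together with \eqref{decayth} and \eqref{decay-wt}. For the first part of the nonlinearity, $\Theta_0=M_0G$ is radial, so $\mathcal{R}^\bot\Theta_0$ is a purely rotational field tangent to level sets. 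Hence $\nabla\cdot(\Theta_0\mathcal{R}^\bot\Theta_0)=\mathcal{R}^\bot\Theta_0\cdot\nabla\Theta_0=0$, and this term drops out exactly. This is the mechanism behind the absence of $K_1,J_1$ noted after Proposition~\ref{prop-qg}.

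For the parts of order $t^{-3/2}$ and beyond, we use the standard renormalization argument. The integral over $s\in(0,t/2)$ is expanded by Taylor expansion of $\nabla G(t-s,x-y)$ in $(s,y)$. The moments of $\theta\mathcal{R}^\bot\theta-\Theta_0\mathcal{R}^\bot\Theta_0$ (and of the version with the $\Theta_1$ cross terms removed) are integrable in time on $(0,\infty)$, thanks to the decay $t^{-2-1/2}$ of the relevant pieces and the weighted bounds. They therefore produce the nonlinear corrections in $M_2$ and in $\Theta_3$. The tails $\int_{t/2}^\infty$ are $o(t^{-\gamma_q-3/2})$, or $O(t^{-\gamma_q-2}\log t)$ under the stronger moment hypothesis, where the log comes from the borderline time integral. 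The integral over $(t/2,t)$ is handled by the decay of $\theta\mathcal{R}^\bot\theta$ and $L^p$-$L^q$ estimates, after subtracting the self-similar parts.

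The genuinely nonlinear piece is the contribution of the quadratic self-similar terms of order $t^{-3}$ in the flux, namely $\Theta_0\mathcal{R}^\bot\Theta_2+\Theta_2\mathcal{R}^\bot\Theta_0+\Theta_1\mathcal{R}^\bot\Theta_1$. Their divergence does not vanish, but only the anisotropic (non-radial, quadrupole) parts of $M_0M_2+\tfrac12M_1\otimes M_1$ survive. This explains the combinations $2M_0(M_2^{22}-M_2^{11})+(M_1^1)^2-(M_1^2)^2$ and $M_0(M_2^{12}+M_2^{21})-M_1^1M_1^2$. We compute
\[
	-\int_0^t\nabla G(t-s)*\bigl(\Theta_0\mathcal{R}^\bot\Theta_2+\Theta_2\mathcal{R}^\bot\Theta_0+\Theta_1\mathcal{R}^\bot\Theta_1\bigr)(s)\,ds
\]
explicitly in Fourier variables. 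Products of Gaussians give $e^{-s|\eta|^2-s|\xi-\eta|^2}$, and we write $1/|\eta|$ through the subordination formula $|\eta|^{-1}=c\int_0^\infty\lambda^{-3/2}e^{-1/(4\lambda)}\cdots d\lambda$. After completing squares this yields the heat kernel at the shifted time $t-s^2/(2s+\sigma^2\lambda)$. We then split off the part giving the self-similar $t^{-1/2}\nabla^2G$ term; this is $J_3^{\mathrm L}$. The convergent remainder, with $G(t)$ subtracted, is $J_3^{\mathrm H}$. One then verifies $\lambda^{5}J_3(\lambda^2t,\lambda x)=J_3(t,x)$, so that $J_3$ is exactly of order $t^{-\gamma_q-3/2}$.

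The main obstacle is this explicit evaluation. Separating the nonintegrable part correctly into $J_3^{\mathrm L}$, and showing that the rest, including the integrand near $s=t$ where the shifted time degenerates, is bounded in $L^q$ with the right scaling, requires careful handling of the Riesz symbol. I would first check the cancellation identities for the radial and isotropic components by symbolic Fourier computation, and then bound $J_3^{\mathrm H}$ by scaling to $t=1$.
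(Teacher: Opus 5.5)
Your computation of the explicit profile follows the paper's own route. You use the subordination formula \eqref{Str}, complete squares to reach $G(t-\frac{s^2}{2s+\sigma^2\lambda})$, and split off the non-integrable $s^{-3/2}$ first-moment part as $J_3^{\mathrm{L}}$. The error analysis, however, has a genuine gap. You say the flux remainder $r=\theta\mathcal{R}^\bot\theta-\sum_{m_1+m_2\le 2}\Theta_{m_1}\mathcal{R}^\bot\Theta_{m_2}$ is controlled by Proposition \ref{prop-qg} with \eqref{decayth} and \eqref{decay-wt}, and later you invoke a ``decay $t^{-2-1/2}$ of the relevant pieces''. Proposition \ref{prop-qg} only gives $\theta-\Theta_0-\Theta_1=O(t^{-\gamma_q-1}\log t)$. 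It does not identify $\Theta_2$, with the nonlinear correction in $M_2$, as the second-order term of $\theta$. So all that follows is $\int_{\mathbb{R}^2}|y|^\mu|r(s,y)|\,dy=O(s^{-2+\mu/2}\log s)$, not $O(s^{-5/2+\mu/2}\log s)$. Two things then fail. The second moments entering the coefficient of $\Theta_3$ are of size $s^{-1}\log s$, which is not integrable. The Taylor remainder on $(0,t/2)$ is at best $O\bigl(t^{-\gamma_q-2}\int_1^t s^{-1/2}\log s\,ds\bigr)=O(t^{-\gamma_q-3/2}\log t)$, so even the $o(t^{-\gamma_q-3/2})$ assertion does not follow.

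The missing ingredient is the intermediate level, Proposition \ref{prop-asympmid-wt} and Corollary \ref{cor-asympmid}. You need one full renormalization round at second order, proving $\| |x|^\mu(\theta-\Theta_0-\Theta_1-\Theta_2)(t)\|_{L^q(\mathbb{R}^2)}\le Ct^{-\gamma_q-1+\mu/2}(1+t)^{-1/2}\log(2+t)$ for $0\le\mu\le1$. This is what gives the $s^{-5/2+\mu/2}\log s$ bound on the flux remainder for $\mu\le 3$.

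That round rests on cancellations you do not use:
\begin{itemize}
\item The zeroth moment of $\theta\mathcal{R}^\bot\theta-\Theta_0\mathcal{R}^\bot\Theta_0$ vanishes by \eqref{phantom2}.
\item The cross terms drop out exactly, not only $\Theta_0\mathcal{R}^\bot\Theta_0$: $\nabla\cdot(\Theta_0\mathcal{R}^\bot\Theta_1+\Theta_1\mathcal{R}^\bot\Theta_0)=M_0(M_1\cdot\nabla)\nabla\cdot(G\mathcal{R}^\bot G)=0$.
\item The first moments of these cross terms vanish by \eqref{phantom1}. This makes $M_2$ finite and removes the false logarithm of Proposition \ref{prop-qg}.
\end{itemize}

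At third order you also need the parity fact that the zeroth and second moments of $\Theta_{m_1}\mathcal{R}^\bot\Theta_{m_2}$, $m_1+m_2=2$, vanish. Only this lets you replace $\sum_{m_1+m_2\le1}$ by $\sum_{m_1+m_2\le2}$ in the $\Theta_3$ coefficient and obtain an $O(s^{-3/2}\log s)$, integrable, integrand.

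Finally, the single $\log t$ in the second assertion needs a bootstrap at each level. The first pass gives $(\log t)^2$. Since $\Theta_3,J_3=O(t^{-\gamma_q-3/2})$, this shows $\theta-\Theta_0-\Theta_1-\Theta_2=O(t^{-\gamma_q-3/2})$ with no logarithm. Feeding that log-free flux bound back in yields $O(t^{-\gamma_q-2}\log t)$.

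Two smaller corrections. The combinations appearing in $J_3$ are the anisotropic parts of $M_0M_2-\frac12M_1\otimes M_1$, not $M_0M_2+\frac12M_1\otimes M_1$. Nothing degenerates at $s=t$, since $t-\frac{s^2}{2s+\sigma^2\lambda}\ge t/2$. The only singular endpoint is $s\to0$, which the paper handles by identifying the $s^{-3/2}$ coefficient via Lemma \ref{lem-discuss} and \eqref{kakera}.
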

We firstly remark that
\[
	\lambda^{2+3} (\Theta_3,J_3) (\lambda^2 t, \lambda x) = (\Theta_3,J_3) (t,x)
\]
for $\lambda > 0$.
Hence, all components of $J_3$ decay with the same rate as $\Theta_3$ that is
\begin{equation}\label{opdecay}
	\| (\Theta_3,J_3) (t) \|_{L^q (\mathbb{R}^2)} = t^{-\gamma_q-3/2} \| (\Theta_3,J_3) (1) \|_{L^q (\mathbb{R}^2)}
\end{equation}
for $t > 0$.
The linear components $\Theta_m$ fulfill that $\int_{\mathbb{R}^2} x^\beta \Theta_m dx = 0$ for $|\beta| \le m-1$ and then
\[
	\| e^{t\Delta} \Theta_m (t_0) \|_{L^q (\mathbb{R}^2)} = O (t^{-\gamma_q-m/2})
\]
as $t \to +\infty$ for an arbitrary fixed $t_0 > 0$.
This is same rate as the optimal decay such as \eqref{opdecay} and, then, these behaviors are linear.
Needless to say that this decay also is obtained directly from the convolution.
Oppositely, since $\int_{\mathbb{R}^2} x^\beta J_3 dx \neq 0$ for some $|\beta| = 2$, the decay
\[
	\| e^{t\Delta} J_3 (t_0) \|_{L^q (\mathbb{R}^2)} = O (t^{-\gamma_q-1})
\]
as $t \to +\infty$ is optimal, but there appears to be a gap between this and \eqref{opdecay}.
More precisely, among the components of this profile, the spatial structure of $J_3^{\mathrm{L}}$ differs significantly from that of $J_3^{\mathrm{H}}$.
Namely, $\| e^{t\Delta} J_3^{\mathrm{L}} (t_0) \|_{L^q (\mathbb{R}^2)} = O (t^{-\gamma_q-1})$ and $\| e^{t\Delta} J_3^{\mathrm{H}} (t_0) \|_{L^q (\mathbb{R}^2)} = O (t^{-\gamma_q-2})$ as $t \to +\infty$ hold.
In other words, $J_3^{\mathrm{L}}$ has the same order of vanishing moments as $\partial_i \partial_j G$, but decays more rapidly.
While $J_3^{\mathrm{H}}$ has higher-orders but its decay-rate is slower than expected.
In general, for linear phenomena such as $\Theta_m$, there is a clear correspondence between scale and spatial structure.
These $J_3$s yield unique profile of the nonlinearity and are due to the nonlinear transport caused by the Coriolis force.
The potential temperature contains a mixture of components whose averaging is accelerated by the rotations and those whose averaging is inhibited by them.
The above discussion becomes clearer when we rewrite $J_3$.
Since $t-\frac{s^2}{2s+\sigma^2\lambda} \ge t/2 \gg 0$ for $(s,\sigma,\lambda) \in (0,t) \times \mathbb{R}_+ \times \mathbb{R}_+$, $G(t-\frac{s^2}{2s+\sigma^2\lambda}) - G(t)$ on $J_3^{\mathrm{H}}$ could be expanded around $- \frac{s^2}{2s+\sigma^2\lambda} = 0$ and then we see
\begin{equation}\label{J3}
\begin{aligned}
	&J_3 (t) = \frac{\sqrt{\pi} \left( 2M_0 (M_2^{22} - M_2^{11}) + (M_1^1)^2 - (M_1^2)^2\right)}{16\pi}
    \sum_{k=0}^\infty \frac{(2k+1)!!}{(2k-1)(2k+4)!!} \left( \frac{t}2 \right)^{k-\frac12} \frac{\partial_1 \partial_2 (-\Delta)^k G(t)}{k!}\\
	&+
	\frac{\sqrt{\pi} \left( M_0 (M_2^{12} + M_2^{21}) - M_1^1 M_1^2\right)}{16\pi}
    \sum_{k=0}^\infty\frac{(2k+1)!!}{(2k-1)(2k+4)!!} \left( \frac{t}2 \right)^{k-\frac12} \frac{(\partial_1^2-\partial_2^2) (-\Delta)^k G(t)}{k!}.
\end{aligned}
\end{equation}
The terms of $k=0$ are $J_3^{\mathrm{L}}$ itself and $\sum_{k=1}^\infty$ is coming from $J_3^{\mathrm{H}}$.
This background explains why readers find the form of the coefficient unsettling, i.e., $\frac{(2k+1)!!}{(2k-1)(2k+4)!!}$ is negative only for $k = 0$.
We confirm that this summation converges in $L^q (\mathbb{R}^2)$ for $1 \le q \le \infty$.
Comparing with the expansion of $G(t/2)$ for $t > 0$ as
\begin{equation}\label{Ghf}
	\left( \frac{t}2 \right)^{-1/2} \partial_i \partial_j G\left( \frac{t}2 \right)
	= - \sum_{k = 0}^\infty \left( \frac{t}2 \right)^{k-1/2} \frac{\partial_i \partial_j (-\Delta)^k G(t)}{k!},
\end{equation}
then this converges absolutely in the Lebesgue spaces.
Far from being worse, $J_3$ contains auxiliary factor $\frac{(2k+1)!!}{(2k-1)(2k+4)!!} \approx 1/k^2$.
Comparing with \eqref{Ghf} also suggests that, among the components in \eqref{J3}, the terms of low $k$ closely reflect the nonlinear structure of \eqref{qg}.
The coefficients of $\Theta_2$ and $\Theta_3$ contain unclear moments.
They arise sure from the nonlinear term but
are linear components since $\partial_t \Theta_m = \Delta \Theta_m$ still holds.
Reader may concern that the second moment $M_2$ diverges to infinity since \eqref{decay-wt} and Proposition \ref{prop-qg} suggest that $\int_{\mathbb{R}^2} x_i (\theta\mathcal{R}_j\theta - \Theta_0\mathcal{R}_j\Theta_0) dx = O (t^{-1}\log t)$ as $t \to +\infty$.
The third moments on $\Theta_3$ contain the same concern.
We will solve this crux in Sections \ref{sect2} and \ref{sect3}.
Some readers may associate the coefficient of first parts of $J_3$ with the special case of isotropy of incompressible Navier--Stokes flow (cf.\cite{Brndls-Okb,MykwSchnbk}).
This case is associated to $M_1^1 = M_1^2$ and $M_2^{11} = M_2^{22}$.
We recall here Proposition \ref{prop-qg}, since $\Theta_2$ has no logarithmic shift, and confirm that the logarithmic evolution on sharp estimate of this proposition is false, i.e., \[
	\| (\theta - M_0 G - M_1 \cdot \nabla G)(t) \|_{L^q (\mathbb{R}^2)} = O (t^{-\gamma_q-1})
\]
as $t \to +\infty$ holds in fact.
We will remove such false logarithms in several scene.
However, we expect that the logarithmic evolution on second assertion of Theorem \ref{thm} is optimal (see Remark \ref{rem31}).

Comparing with similar equations, Theorem \ref{thm} says that the nonlinearity of \eqref{qg} is weaker than what is expected from its scale.
Here, we compare with the convection-diffusion equation that
\begin{equation}\label{bg}
\left\{
\begin{array}{lr}
	\partial_t \rho - \Delta \rho = a\cdot\nabla (|\rho|\rho),
	&
	t > 0,~ x \in \mathbb{R}^2,\\
	\rho (0,x) = \rho_0 (x),
	&
	x \in \mathbb{R}^2,
\end{array}
\right.
\end{equation}
and the artificial equation that
\begin{equation}\label{qgp}
\left\{
\begin{array}{lr}
	\partial_t \vartheta - \nabla \cdot (\vartheta\mathcal{R}\vartheta) = \Delta \vartheta,
	&
	t > 0,~ x \in \mathbb{R}^2,\\
	\vartheta (0,x) = \vartheta_0 (x),
	&
	x \in \mathbb{R}^2,
\end{array}
\right.
\end{equation}
where $a \in \mathbb{R}^2$ is a given direction and $\mathcal{R} = (\mathcal{R}_1, \mathcal{R}_2)$ is the forward-directed Riesz transform.
These three equations are identical not only in scale but also in the order of differentiation if the Riesz transforms are regarded as zeroth-order derivatives.
The only difference lies in their symmetry.
For these problems,
\begin{equation}\label{expcd}
	\rho (t)  \sim M_0 G(t) + \frac{|M_0| M_0}{8\pi} a \cdot \nabla G(t) \log t + M_1 \cdot \nabla G(t) + J_1^\rho (t)
\end{equation}
for $M_0 = \int_{\mathbb{R}^2} \rho_0 (x) dx,~ M_1 = - \int_{\mathbb{R}^2} x \rho_0 (x) dx + a \int_0^\infty \int_{\mathbb{R}^2} ((|\rho|\rho) (t,x) - |M_0|M_0 G^2 (1+t,x)) dxdt$ and
\[
	J_1^\rho (t) = \frac{|M_0|M_0}{8\pi} \int_0^t s^{-1} (a\cdot\nabla G(t-\tfrac{s}2) - a\cdot\nabla G(t)) ds,
\]
and
\[
	\vartheta (t) \sim M_0 G(t) + M_1 \cdot \nabla G(t) + J_1^\vartheta (t)
\]
for $M_0 = \int_{\mathbb{R}^2} \vartheta_0 (x) dx,~ M_1 = - \int_{\mathbb{R}^2} x \vartheta_0 (x) dx$ and
\begin{equation}\label{Jart}
	J_1^\vartheta (t) = \frac{\sqrt{\pi}M_0^2}{8\pi^2} \int_0^t \int_0^\infty \int_0^\infty \lambda^{-3/2} e^{-\frac1{4\lambda}} s (2s+\sigma^2\lambda)^{-2} \Delta G\left( t-\tfrac{s^2}{2s+\sigma^2 \lambda}\right) d\lambda d\sigma ds
\end{equation}
as $t \to +\infty$ are hold (see also \cite{Ymmt26}).
For \eqref{bg}, the expansion up to the logarithmic shift $K(t) = K_1 (t) \log t$ for $K_1 = \frac{|M_0|M_0}{8\pi} a \cdot \nabla G(t)$ is well-known (see\cite{Escbd-ZZ,Ksb,ZZ93}).
Such a shift arises from the nonlinear term and satisfies the linear equation $\partial_t K = \Delta K + \frac{K}{t\log t}$.
For these profiles
$
	\lambda^{2+1} (M_1 \cdot \nabla G, K_1, J_1^\rho, J_1^\vartheta) (\lambda^2 t, \lambda x) = (M_1 \cdot \nabla G, K_1, J_1^\rho, J_1^\vartheta) (t,x)
$
holds for $\lambda > 0$ and, then, their decay rates are same that
\[
    \| (M_1 \cdot \nabla G, K_1, J_1^\rho, J_1^\vartheta)(t) \|_{L^q (\mathbb{R}^2)}
    =
    t^{-\gamma_q-1/2} \| (M_1 \cdot \nabla G, K_1, J_1^\rho, J_1^\vartheta)(1) \|_{L^q (\mathbb{R}^2)}
\]
for $t>0$.
On the other hand, both of $J_1^\rho$ and $J_1^\vartheta$ have the higher-order derivatives than one of $(M_1 \cdot \nabla G,K_1)$.
In particular,
\[
    \| e^{t\Delta} J_1^\rho (t_0) \|_{L^q (\mathbb{R}^2)} = O(t^{-\gamma_q-3/2})\quad\text{and}\quad
    \| e^{t\Delta} J_1^\vartheta (t_0) \|_{L^q (\mathbb{R}^2)} = O(t^{-\gamma_q-1})
\]
as $t\to+\infty$ hold, and then, the correspondence between the scale and the spatial structure is collapse and the effect of nonlinear-distorting of each solution is expressed in early stage.
As another example, solutions to the three-dimensional Nernst--Planck drift-diffusion equation, which has the same scale as above three models, also exhibit nonlinear profile similar to $J_1^\vartheta$ (cf.\cite{Ymmt26ZAMP}).
But, judging from Theorem \ref{thm}, our problem \eqref{qg} does not have such a strong nonlinearity.
This is because the radially symmetrization \eqref{asymp0} weakens the nonlinearity arising from the Coriolis force as \eqref{begin} below.
This corresponds precisely to replacing $-\Delta G= \nabla \cdot \mathcal{R} (-\Delta)^{1/2} G$ with $0 = \nabla \cdot \mathcal{R}^\bot (-\Delta)^{1/2} G$ in \eqref{Jart}.

Before closing this section, we recall our model with dissipation $(-\Delta)^{\alpha/2}$.
One of the technical reasons we consider the case $\alpha = 2$ is that the fundamental solution of the linear semigroup $e^{t\Delta}$ is given by the specific elementary function $G(t)$.
This is advantageous when writing nonlinear profile $J_3$ explicitly.
The other case $\alpha = 1$ sure provides the concrete fundamental solution.
But, it is important to note that, in the case $\alpha = 2$ only, the fundamental solution is rapidly decreasing with respect to space.
If we treat small $\alpha$, then the nonlinear effects become relatively weak.
So we should decide higher-order expansion to get nonlinear profiles such as $J_3$.
In this procedure, integrability of moments of solutions with high-order is required.
In other words, weighted estimates such as \eqref{decay-wt} with large $m$ are needed.
The slow decay of solutions may hinder this integrability.
To address the fractional dissipation, it will be necessary to develop a framework separate from the one presented in this paper (see for example \cite{Brndls-Krch}).
In fact, for orders up to the second, there is preceding work that deals with expansion of solutions to the fractional equations.
In \cite{OhAdhIwbch}, the expansion up to the second order that includes the term arising from the nonlinear effect is derived.
This term actually consists of the linear profiles such as the part of $\Theta_2$ and some errors and phantoms.

\vspace{2mm}

\noindent
\textbf{Notations.}
We often omit the spatial variable from a function, for example, $\theta (t) = \theta (t,x)$.
This variable is hidden also in convolutions.
Precisely, $e^{t\Delta} \theta_0 = G(t) * \theta_0 = \int_{\mathbb{R}^2} G(t,x-y) \theta_0 (y) dy$ and $\int_0^t g(t-s) * f(s) ds = \int_0^t \int_{\mathbb{R}^2} g(t-s,x-y) f(s,y) dyds$.
The Fourier transform and its inverse are defined by $\mathcal{F} [\varphi] (\xi) = (2\pi)^{-1}$ $\int_{\mathbb{R}^2} e^{-ix\cdot\xi} \varphi (x) dx$ and $\mathcal{F}^{-1} [\varphi] (x) = (2\pi)^{-1} \int_{\mathbb{R}^2} e^{ix\cdot\xi} \varphi (\xi) d\xi$, where $i = \sqrt{-1}$.
The derivations are denoted by $\partial_t = \partial/\partial t,~ \partial_j = \partial/\partial x_j$ for $j = 1,2$, and $\nabla = (\partial_1,\partial_2),~ \nabla^\bot = (-\partial_2,\partial_1)$ and $\Delta = \partial_1^2 + \partial_2^2$.
The Riesz transforms $\mathcal{R}_j \varphi = \partial_j (-\Delta)^{-1/2} \varphi = \mathcal{F}^{-1}[ i\xi_j/|\xi| \mathcal{F} [\varphi]]$ for $j=1,2$, and $\mathcal{R}^\bot = (-\mathcal{R}_2,\mathcal{R}_1)$ and $\mathcal{R} = (\mathcal{R}_1,\mathcal{R}_2)$ are regarded as zeroth-order derivatives.
The Lebesgue space and its norm are denoted by $L^q (\mathbb{R}^2)$ and $\| \cdot \|_{L^q (\mathbb{R}^2)}$, that is, $\| f \|_{L^q (\mathbb{R}^2)} = (\int_{\mathbb{R}^2} |f(x)|^q dx)^{1/q}$ for $1 \le q < \infty$ and $\| f \|_{L^\infty (\mathbb{R}^2)}$ is the essential supremum.
The inner-product of $f$ and $g \in L^2 (\mathbb{R}^2)$ is denoted by $\langle f,g \rangle = \int_{\mathbb{R}^2} (f\overline{g}) (x) dx$.
The heat kernel and its decay rate on $L^q (\mathbb{R}^2)$ are symbolized by $G(t,x) = (4\pi t)^{-1} e^{-|x|^2/(4t)}$ and $\gamma_q = 1-1/q$.
The indices of functions correspond to their scales or decay-rates, for example, $\lambda^{2+m} \Theta_m (\lambda^2 t, \lambda x) = \Theta_m (t,x)$ for $\lambda > 0,~ \| J_3 (t) \|_{L^q (\mathbb{R}^2)} = t^{-\gamma_q-3/2} \| J_3 (1) \|_{L^q (\mathbb{R}^2)}$ for $t > 0$, and $\| r_4 (t) \|_{L^q (\mathbb{R}^2)} = O (t^{-\gamma_q-2} \log t)$ as $t \to +\infty$.
We employ Landau symbol.
Namely, $f(t) = o(t^{-\mu})$ and $g(t) = O(t^{-\mu})$ mean $t^\mu f(t) \to 0$ and $t^\mu g(t) \to c$ for some $c \in \mathbb{R}$ such as $t \to +\infty$ or $t \to +0$, respectively.
Various nonnegative constants are denoted by $C$.

\section{Preliminaries}\label{sect2}
To prove our main result, we prepare some tools.
The following lemma, which we introduce to clarify the discussion, requires no proof.
\begin{lemma}\label{lem-discuss}
Let $g \in C((0,1), L^1 (\mathbb{R}^2))$ have $f,h \in L^1 (\mathbb{R}^2)$ and $E \in L^1 ((0,1)\times\mathbb{R}^2)$ such as $g(s) - s^{-3/2} f \in L^1 ((0,1)\times\mathbb{R}^2)$ and $g(s) = s^{-3/2} h + E(s)$.
Then $h = f$ holds.
\end{lemma}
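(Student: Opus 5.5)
The plan is to subtract the two decompositions of $g$ and study what is left. Put $D(s) := g(s) - s^{-3/2} f$. By hypothesis $D \in L^1((0,1)\times\mathbb{R}^2)$. From $g(s) = s^{-3/2} h + E(s)$ we also have
\[
	D(s) = s^{-3/2}(h-f) + E(s),
\]
so that
\[
	s^{-3/2}(h-f) = D(s) - E(s) \in L^1((0,1)\times\mathbb{R}^2).
\]
The left-hand side does not depend on $x$ through $s$, so we can integrate in space first and then in time, as Tonelli allows for nonnegative integrands:
\[
	\int_0^1 \int_{\mathbb{R}^2} s^{-3/2} |(h-f)(x)|\, dx\, ds
	= \|h-f\|_{L^1(\mathbb{R}^2)} \int_0^1 s^{-3/2}\, ds .
\]

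The integral $\int_0^1 s^{-3/2} ds$ diverges, since $3/2 \ge 1$. The left-hand side, however, is finite because $D - E \in L^1((0,1)\times\mathbb{R}^2)$. The only way both can hold is $\|h-f\|_{L^1(\mathbb{R}^2)} = 0$. Hence $h = f$ almost everywhere, that is, as elements of $L^1(\mathbb{R}^2)$.

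There is no real obstacle here. The one point to watch is measurability on the product space, and it is already built into the hypotheses, since $D$ and $E$ are assumed to lie in $L^1((0,1)\times\mathbb{R}^2)$. The continuity $g \in C((0,1),L^1(\mathbb{R}^2))$ is not needed for the argument; it only guarantees that $g$ is a measurable function of $(s,x)$.

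In the paper this lemma will be used to identify the singular coefficient $s^{-3/2}$ in the time integrals that define $M_2$ and $\Theta_3$. That identification is what shows the apparently divergent moments are in fact finite once the matching profile is subtracted.
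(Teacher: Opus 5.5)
Your argument is correct. The paper gives no proof, saying the lemma ``requires no proof,'' and your argument is exactly the elementary reasoning it has in mind: $s^{-3/2}(h-f)=(g-s^{-3/2}f)-E$ is integrable on $(0,1)\times\mathbb{R}^2$, which by Tonelli and $\int_0^1 s^{-3/2}\,ds=\infty$ forces $h=f$.

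Your closing remark on how the lemma is used is off. The paper does not use it for the coefficients of $M_2$ or $\Theta_3$; their convergence comes from the phantoms and Corollary~\ref{cor-asympmid}. It uses the lemma to identify the $s^{-3/2}$ coefficient $f$ with the explicitly computed $h$, which gives \eqref{kakera} and hence the closed form of $J_3^{\mathrm{L}}$.
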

%
%
It is well known that Riesz transform is bounded in $L^q (\mathbb{R}^2)$ for $1 < q < \infty$.
\begin{lemma}[see \cite{Stin}]\label{lemRz}
For $1 < q < \infty$, there is $C$ such that
$
	\| \mathcal{R}^\bot \varphi \|_{L^q (\mathbb{R}^2)}
	\le
	C \| \varphi \|_{L^q (\mathbb{R}^2)}
$
for $\varphi \in L^q (\mathbb{R}^2)$.
\end{lemma}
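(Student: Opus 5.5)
The lemma is classical, so I would reproduce the standard Calder\'on--Zygmund argument rather than anything tailored to \eqref{qg}. Since $\mathcal{R}^\bot=(-\mathcal{R}_2,\mathcal{R}_1)$, it suffices to bound each $\mathcal{R}_j$ separately on $L^q(\mathbb{R}^2)$.

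\textbf{The case $q=2$.} The multiplier $i\xi_j/|\xi|$ has modulus at most $1$. Plancherel's theorem therefore gives
\[
\|\mathcal{R}_j\varphi\|_{L^2(\mathbb{R}^2)}\le\|\varphi\|_{L^2(\mathbb{R}^2)}.
\]

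\textbf{The kernel.} For $\varphi$ in the Schwartz class, I would identify $\mathcal{R}_j$ with a singular integral:
\[
\mathcal{R}_j\varphi(x)=c\,\mathrm{p.v.}\!\int_{\mathbb{R}^2}\frac{x_j-y_j}{|x-y|^{3}}\varphi(y)\,dy .
\]
To derive this, write $(-\Delta)^{-1/2}\varphi=c'|x|^{-1}*\varphi$ (the Riesz potential in two dimensions) and differentiate under the principal value. The kernel $K_j(x)=c\,x_j|x|^{-3}$ is homogeneous of degree $-2$, smooth away from the origin and odd. Consequently it satisfies the H\"ormander condition
\[
\sup_{y\neq0}\int_{|x|>2|y|}|K_j(x-y)-K_j(x)|\,dx<\infty,
\]
which follows from the gradient bound $|\nabla K_j(x)|\le C|x|^{-3}$.

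\textbf{Weak $(1,1)$ and interpolation.} Next I would prove that $\mathcal{R}_j$ is of weak type $(1,1)$ via the Calder\'on--Zygmund decomposition. Given $\varphi\in L^1\cap L^2$ and a height $\alpha>0$, split $\varphi=g+\sum_Q b_Q$. The good part $g$ satisfies $|g|\le C\alpha$ and $\|g\|_{L^1}\le\|\varphi\|_{L^1}$, and is handled by the $L^2$ bound together with Chebyshev's inequality. Each bad part $b_Q$ is supported on a cube $Q$ and has mean zero. Off the doubled cubes $Q^*$, the mean-zero property combined with the H\"ormander condition controls $\int_{(Q^*)^c}|\mathcal{R}_jb_Q|\,dx$ by $C\|b_Q\|_{L^1}$. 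The union of the $Q^*$ has measure at most $C\|\varphi\|_{L^1}/\alpha$. Marcinkiewicz interpolation between weak $(1,1)$ and strong $(2,2)$ then gives boundedness for $1<q\le2$.

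\textbf{Duality for $2<q<\infty$.} Since $\mathcal{R}_j^*=-\mathcal{R}_j$ (the multiplier is odd and purely imaginary), boundedness on $L^{q'}$ with $q'<2$ transfers to $L^q$. Density of Schwartz functions extends the estimate to all of $L^q(\mathbb{R}^2)$.

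\textbf{Main obstacle.} The only genuinely delicate step is the weak $(1,1)$ estimate for the bad part, namely the cancellation argument using the H\"ormander condition. This is precisely where the degree $-2$ homogeneity and the smoothness of the kernel are used. Since the paper cites \cite{Stin}, I would simply invoke the Calder\'on--Zygmund theorem stated there once the kernel properties are verified.
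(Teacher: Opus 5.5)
Your proposal is correct, and it is the classical Calder\'on--Zygmund argument: Plancherel for $q=2$, weak type $(1,1)$ via the Calder\'on--Zygmund decomposition and the H\"ormander condition, Marcinkiewicz interpolation, and duality through $\mathcal{R}_j^*=-\mathcal{R}_j$; the paper gives no proof of its own and simply invokes this result by citing \cite{Stin}. One cosmetic point: with the H\"ormander condition stated for $|x|>2|y|$, the enlarged cubes $Q^*$ must be dilates of $Q$ by a factor such as $2\sqrt{2}$ rather than literal doubles, which changes only the constants.
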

Needless to say that this lemma is available for $\mathcal{R}$ and $\mathcal{R}_j$.
For some weighted estimates of Riesz transform, the following H\"ormander--Mikhlin estimate is required.
\begin{lemma}[cf. \cite{Sbt-Smz}]\label{lemHM}
Let $N \in \mathbb{Z}_+,~ 0 < \mu \le 1$ and $\lambda = N+\mu-2$.
Assume that $\varphi \in C^\infty (\mathbb{R}^2\backslash \{ 0 \})$ satisfies the following conditions:
\begin{itemize}
\item
	$\nabla^\gamma \varphi \in L^1 (\mathbb{R}^2)$ for any $\gamma \in \mathbb{Z}_+^2$ with $|\gamma| \le N$;
\item
	$|\nabla^\gamma \varphi (\xi)| \le C_\gamma |\xi|^{\lambda-|\gamma|}$ for $\xi\neq 0$ and $\gamma \in \mathbb{Z}_+^2$ with $|\gamma| \le N+2$.
\end{itemize}
Then
\[
	\sup_{x\neq 0} (|x|^{2+\lambda} |\mathcal{F}^{-1} [\varphi] (x)|) < +\infty
\]
holds.
\end{lemma}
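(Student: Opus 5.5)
The plan is to reduce the weighted pointwise bound to a Littlewood--Paley type splitting of a single derivative of $\varphi$, in the spirit of the classical proof of the H\"ormander--Mikhlin theorem. Since $2+\lambda = N+\mu$ and $|x|^N \le C\sum_{|\gamma|=N} |x^\gamma|$, it suffices to show, for each multi-index $\gamma$ with $|\gamma| = N$, that $|x|^{\mu} |x^\gamma \mathcal{F}^{-1}[\varphi](x)| \le C$ for $x \neq 0$. The first step is the identity $x^\gamma \mathcal{F}^{-1}[\varphi] = c_\gamma \mathcal{F}^{-1}[\nabla^\gamma \varphi]$ with $|c_\gamma| = 1$. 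I will justify it in the sense of tempered distributions by integrating by parts on $\{|\xi|>\varepsilon\}$ and letting $\varepsilon \to 0$. The boundary terms on $|\xi| = \varepsilon$ are bounded by $\varepsilon \cdot \varepsilon^{\lambda-|\gamma'|}$ with $|\gamma'| \le N-1$, hence by $\varepsilon^{\mu}$ up to a constant, and so they vanish. The hypothesis $\nabla^{\gamma'}\varphi \in L^1(\mathbb{R}^2)$ for $|\gamma'| \le N$ makes every intermediate Fourier transform a bounded continuous function. Put $\psi = \nabla^\gamma \varphi$. Then $\psi \in L^1(\mathbb{R}^2)$, and the symbol condition gives $|\nabla^\beta \psi(\xi)| \le C|\xi|^{\mu-2-|\beta|}$ for $|\beta| \le 2$.

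The second step is to prove $|\mathcal{F}^{-1}[\psi](x)| \le C|x|^{-\mu}$. Fix $x \neq 0$ and a cutoff $\chi \in C_0^\infty$ with $\chi = 1$ on $|\xi| \le 1$ and $\chi = 0$ on $|\xi| \ge 2$. Split $\psi = \chi(|x|\xi)\psi + (1-\chi(|x|\xi))\psi$. For the low-frequency part, the crude bound suffices:
\[
\int_{|\xi| \le 2/|x|} |\xi|^{\mu-2}\, d\xi \le C|x|^{-\mu},
\]
which is finite because $\mu > 0$. For the high-frequency part, I integrate by parts twice using $|x|^2 e^{ix\cdot\xi} = -\Delta_\xi e^{ix\cdot\xi}$. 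This gives
\[
|x|^2 \bigl|\mathcal{F}^{-1}[(1-\chi(|x|\cdot))\psi](x)\bigr| \le C\int \bigl|\Delta_\xi\bigl((1-\chi(|x|\xi))\psi(\xi)\bigr)\bigr|\, d\xi .
\]
The derivatives falling on the cutoff are supported in $1/|x| \le |\xi| \le 2/|x|$ and contribute at most $|x|^{2}\int_{|\xi|\sim 1/|x|} |\xi|^{\mu-2}d\xi + |x|\int_{|\xi|\sim1/|x|}|\xi|^{\mu-3}d\xi \le C|x|^{2-\mu}$. The term with $\Delta\psi$ is bounded by $\int_{|\xi| \ge 1/|x|} |\xi|^{\mu-4}\, d\xi \le C|x|^{2-\mu}$, which converges at infinity because $\mu \le 1 < 2$. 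Dividing by $|x|^2$ gives $C|x|^{-\mu}$ for this part as well, and combining the two parts proves the claim.

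The main obstacle is the rigorous justification of the integrations by parts: near $\xi = 0$, where $\varphi$ is singular, and at $|\xi| \to \infty$, where $\psi$ itself is only $O(|\xi|^{\mu-2})$ and not absolutely integrable by the symbol bound alone. Near the origin, the high-frequency integration is performed away from $\xi=0$ because of the cutoff, and the moment identity in the first step is handled by the $\varepsilon$-excision described there. At infinity, I will first work with $\psi(\xi)e^{-\delta|\xi|^2}$ (or with a large-radius truncation). The extra derivatives of the Gaussian factor obey the same bounds uniformly in $\delta$, so every estimate above is uniform in $\delta$. Since $\psi \in L^1(\mathbb{R}^2)$, the Fourier transforms then converge pointwise as $\delta \to 0$ by dominated convergence, and the uniform bound passes to the limit.
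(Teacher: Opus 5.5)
The paper gives no proof of this lemma; it is quoted from \cite{Sbt-Smz}, so there is no in-paper argument to compare against. Your proof is correct and self-contained, and it follows the standard route for H\"ormander--Mikhlin type pointwise kernel bounds:
\begin{itemize}
\item the moment identity $x^\gamma \mathcal{F}^{-1}[\varphi] = i^{|\gamma|}\mathcal{F}^{-1}[\nabla^\gamma\varphi]$, where excising $|\xi|<\varepsilon$ costs only $O(\varepsilon^{\mu})$ because $|\gamma'|\le N-1$;
\item a single split of $\psi=\nabla^\gamma\varphi$ at the frequency $|\xi|=|x|^{-1}$, where the low part needs $\mu>0$ and the high part needs two integrations by parts and $\mu<2$.
\end{itemize}

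You handle both delicate points correctly.
\begin{itemize}
\item At the origin, the excision argument shows that the pointwise derivatives of $\varphi$ coincide with the distributional ones.
\item At infinity, the Gaussian regularization $\psi e^{-\delta|\xi|^2}$ is genuinely needed. Without it, the boundary term of size $R\cdot R^{\mu-2}|x|$ on $|\xi|=R$ does not vanish when $\mu=1$. The regularization works because $|\nabla^\alpha e^{-\delta|\xi|^2}|\le C_\alpha|\xi|^{-|\alpha|}$ uniformly in $\delta$, so the symbol bounds on $\psi$ transfer uniformly. In addition, $\psi\in L^1(\mathbb{R}^2)$ gives pointwise convergence of the inverse Fourier transforms as $\delta\to0$.
\end{itemize}

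Compared with simply citing the result, your argument shows where each hypothesis enters.
\begin{itemize}
\item The $L^1$ assumptions on $\nabla^\gamma\varphi$ for $|\gamma|\le N$ give $\mathcal{F}^{-1}[\varphi]$ and $\mathcal{F}^{-1}[\nabla^\gamma\varphi]$ a pointwise meaning.
\item The symbol bounds of order $\le N$ drive the low-frequency estimate and the excision.
\item The bounds of order $N+1$ and $N+2$ are used only in the two integrations by parts on the high-frequency piece.
\end{itemize}
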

Moreover, Riesz transforms do not change the scale.
Namely, $\lambda^{2+m} \mathcal{R}^\bot \Theta_m (\lambda^2 t, \lambda x) = \mathcal{R}^\bot \Theta_m (t,x)$ for $\lambda > 0$.
Now the following bilinear estimate is available.
\begin{corollary}\label{cor-asymplow-wt}
Let $\theta_0 \in L^1 (\mathbb{R}^2) \cap L^\infty (\mathbb{R}^2)$ and $|x|^2 \theta_0 \in L^1 (\mathbb{R}^2)$.
Then
\[
	\| |x|^\mu (\theta \mathcal{R}^\bot \theta - \Theta_0 \mathcal{R}^\bot \Theta_0 - \Theta_1 \mathcal{R}^\bot \Theta_0 - \Theta_0 \mathcal{R}^\bot \Theta_1) (t) \|_{L^q (\mathbb{R}^2)}
	=
	O(t^{-\gamma_q-2+\mu/2} \log t)
\]
as $t \to +\infty$ holds for $1 \le q < \infty$ and $0 \le \mu \le 2$.
\end{corollary}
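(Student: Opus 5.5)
We write $r(t) = \theta(t) - \Theta_0(t) - \Theta_1(t)$. By Proposition \ref{prop-qg}, under $|x|^2\theta_0 \in L^1(\mathbb{R}^2)$, we have $\|r(t)\|_{L^p(\mathbb{R}^2)} = O(t^{-\gamma_p-1}\log t)$ for $1\le p\le\infty$. The plan is to expand the bilinear expression algebraically:
\[
	\theta \mathcal{R}^\bot \theta - \Theta_0 \mathcal{R}^\bot \Theta_0 - \Theta_1 \mathcal{R}^\bot \Theta_0 - \Theta_0 \mathcal{R}^\bot \Theta_1
	= \Theta_1 \mathcal{R}^\bot \Theta_1 + r\,\mathcal{R}^\bot \theta + (\Theta_0+\Theta_1)\mathcal{R}^\bot r .
\]
We then estimate each term in weighted $L^q$ with weight $|x|^\mu$, $0\le\mu\le2$. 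It suffices to treat $\mu=0$ and $\mu=2$, since the intermediate cases follow by the interpolation $\||x|^\mu f\|_{L^q} \le \|f\|_{L^q}^{1-\mu/2}\||x|^2 f\|_{L^q}^{\mu/2}$ (H\"older pointwise).

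\textbf{Unweighted case.} Take $1\le q<\infty$ and apply H\"older with exponents $2q,2q$.
\begin{itemize}
\item By Lemma \ref{lemRz} (valid since $1<2q<\infty$) together with \eqref{decayth}, $\|r\mathcal{R}^\bot\theta\|_{L^q} \le C\|r\|_{L^{2q}}\|\theta\|_{L^{2q}} = O(t^{-\gamma_{2q}-1}\log t\cdot t^{-\gamma_{2q}})$. Since $2\gamma_{2q} = 1+\gamma_q$, this equals $O(t^{-\gamma_q-2}\log t)$.
\item The term $(\Theta_0+\Theta_1)\mathcal{R}^\bot r$ is handled identically.
\item $\Theta_1\mathcal{R}^\bot\Theta_1$ is exactly self-similar of order $t^{-\gamma_q-2}$ by scaling. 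Here $\mathcal{R}^\bot\Theta_1(1)$ is bounded and $L^{2q}$-integrable, since its symbol is $\xi\mapsto i\xi^\bot\cdot M_1\, i\xi\, e^{-|\xi|^2}/|\xi|$, which is homogeneous of degree $1$ near $0$.
\end{itemize}

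\textbf{Weighted case $\mu=2$.} We place the weight on whichever factor is convenient, writing $|x|^2 r\,\mathcal{R}^\bot\theta$ and $(\Theta_0+\Theta_1)\,|x|^2\mathcal{R}^\bot r$ (or moving the weight onto $\Theta$). For the first term we need $\||x|^2 r(t)\|_{L^{2q}} = O(t^{-\gamma_{2q}}\log t)$. This follows from \eqref{decay-wt} with $m=2$ for $\theta$, combined with the explicit Gaussian bounds for $\Theta_0,\Theta_1$. That combination gives only $O(t^{-\gamma_{2q}})$ rather than a gain, but a gain is not needed: $\|\mathcal{R}^\bot\theta\|_{L^{2q}} \sim t^{-\gamma_{2q}}$ gives total rate $t^{-\gamma_q-1}$, which is exactly $t^{-\gamma_q-2+\mu/2}$ at $\mu=2$, so the log is even unnecessary. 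Alternatively, one can interpolate the weighted estimate for $r$ between $\|r\|_{L^{2q}}$ and $\||x|^3 r\|$.

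\textbf{Main obstacle.} The term with the weight falling on $\mathcal{R}^\bot r$ (equivalently on $\mathcal{R}^\bot\theta$ in the other term), since the Riesz transform does not commute with $|x|^2$. We would use the commutator identity
\[
	x_k \mathcal{R}_j f = \mathcal{R}_j(x_k f) + \mathcal{F}^{-1}\bigl[\partial_{\xi_k}(\xi_j/|\xi|)\hat f\,\bigr]\cdot(\text{const}),
\]
applied twice. The resulting commutator symbols are homogeneous of degree $-1$ and $-2$; the degree $-2$ one is not $L^q$-bounded in two dimensions. To avoid it, the preferred route is to place the weight only on the Gaussian factor: pointwise
\[
	|x|^2\,|\Theta_0\,\mathcal{R}^\bot r| \le \bigl\||x|^2\Theta_0\bigr\|_{L^{2q}}\text{-type bound}\times|\mathcal{R}^\bot r|,
\]
where $|x|^2\Theta_0 = t\cdot(\text{Gaussian profile})$. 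This gives $O(t\cdot t^{-\gamma_{2q}}\cdot t^{-\gamma_{2q}-1}\log t) = O(t^{-\gamma_q-1}\log t)$, as required. For $|x|^2 r\,\mathcal{R}^\bot\theta$ the weight already sits on $r$, so no commutator is needed; and $|x|^2\Theta_1\mathcal{R}^\bot\Theta_1$ is handled by scaling once $\mathcal{R}^\bot\Theta_1(1)$ is bounded, for which Lemma \ref{lemHM} gives $|\mathcal{R}^\bot\Theta_1(1,x)|\le C|x|^{-3}$. Lemma \ref{lemHM} then also certifies that $|x|^2\Theta_1\mathcal{R}^\bot\Theta_1(1)\in L^q$, because the Gaussian factor $\Theta_1$ kills all polynomial growth. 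Thus the only genuinely delicate point is choosing the weight placement so that no weighted Riesz bound on $r$ is ever needed, and this is also why $q=\infty$ is excluded.
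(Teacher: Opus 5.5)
Your algebraic decomposition is correct. The unweighted bounds, the reduction to $\mu\in\{0,2\}$ by interpolation, and the $\mu=2$ bounds for $(\Theta_0+\Theta_1)\mathcal{R}^\bot r$ and $\Theta_1\mathcal{R}^\bot\Theta_1$ also go through. The gap is the weighted bound for $r\,\mathcal{R}^\bot\theta$.

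You assert $\| |x|^2 r(t)\|_{L^{2q}}=O(t^{-\gamma_{2q}})$ from \eqref{decay-wt} and Gaussian bounds, but this drops the growth factor. \eqref{decay-wt} with $m=2$ gives $\| |x|^2\theta(t)\|_{L^{2q}}\le Ct^{-\gamma_{2q}}(1+t)$, and $\| |x|^2\Theta_0(t)\|_{L^{2q}}=t^{1-\gamma_{2q}}\| |x|^2\Theta_0(1)\|_{L^{2q}}$. The triangle inequality therefore yields only $O(t^{1-\gamma_{2q}})$. Paired with $\|\mathcal{R}^\bot\theta(t)\|_{L^{2q}}\le Ct^{-\gamma_{2q}}$, this gives $O(t^{1-2\gamma_{2q}})=O(t^{-\gamma_q})$, a full factor $t$ short of the required $O(t^{-\gamma_q-1}\log t)$.

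What you would actually need is a weighted version of Proposition \ref{prop-qg}, namely $\| |x|^2 r(t)\|_{L^{2q}}=O(t^{-\gamma_{2q}}\log t)$. That is not available and would require its own Duhamel argument. Your interpolation alternative needs $|x|^3\theta_0\in L^1$, which is not assumed, and even then $\|r\|^{1/3}\| |x|^3 r\|^{2/3}$ only gives about $t^{-\gamma_{2q}+2/3}$. Moving the weight onto $\mathcal{R}^\bot\theta$ is ruled out, as you note yourself. So the real obstacle is not the commutator. A factor that can only be weighted crudely (through $\theta$) costs $t^{\mu/2}$ with no gain, so its partner must supply the extra decay $t^{-1}\log t$, and $\mathcal{R}^\bot\theta$ does not.

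The repair is to split once more: $r\,\mathcal{R}^\bot\theta=r\,\mathcal{R}^\bot\Theta_0+r\,\mathcal{R}^\bot\Theta_1+r\,\mathcal{R}^\bot r$.

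In $r\,\mathcal{R}^\bot r$, the crude bound $\| |x|^2 r\|_{L^{2q}}=O(t^{1-\gamma_{2q}})$ is now enough, because $\|\mathcal{R}^\bot r\|_{L^{2q}}=O(t^{-\gamma_{2q}-1}\log t)$ carries the gain.

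In $r\,\mathcal{R}^\bot\Theta_m$ ($m=0,1$), put the weight on the Riesz transform of the explicit profile. Lemma \ref{lemHM} gives $|x|^2\mathcal{R}^\bot\Theta_0(1)\in L^\infty$ and $|x|^3\mathcal{R}^\bot\Theta_1(1)\in L^\infty$. Hence $\|r\|_{L^q}\| |x|^\mu\mathcal{R}^\bot\Theta_m\|_{L^\infty}=O(t^{-\gamma_q-2-m/2+\mu/2}\log t)$. The $|x|^{-2}$ decay of $\mathcal{R}^\bot\Theta_0$ is what restricts the argument to $\mu\le 2$.

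Regrouped, this is the paper's proof. It writes the expression as $\theta\,\mathcal{R}^\bot r+(\theta-\Theta_0)\mathcal{R}^\bot\Theta_1+r\,\mathcal{R}^\bot\Theta_0$. The weight goes on $\theta$ via \eqref{decay-wt} in the first term, and on $\mathcal{R}^\bot\Theta_1$ and $\mathcal{R}^\bot\Theta_0$ via Lemma \ref{lemHM} in the other two.
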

\begin{proof}
Applying \eqref{decay-wt} and Proposition \ref{prop-qg} to
\begin{equation}\label{bsbilin}
\begin{aligned}
	&\theta \mathcal{R}^\bot \theta - \Theta_0 \mathcal{R}^\bot \Theta_0 - \Theta_1 \mathcal{R}^\bot \Theta_0 - \Theta_0 \mathcal{R}^\bot \Theta_1\\
	&= \theta \mathcal{R}^\bot (\theta - \Theta_0 - \Theta_1) + (\theta - \Theta_0) \mathcal{R}^\bot \Theta_1 + (\theta-\Theta_0-\Theta_1) \mathcal{R}^\bot \Theta_0
\end{aligned}
\end{equation}
immediately completes the proof.
Here, Lemma \ref{lemHM} with $\varphi = \frac{\xi^\bot (M_1\cdot \xi)}{|\xi|} e^{-t|\xi|^2}$ and $\frac{\xi^\bot}{|\xi|} e^{-t|\xi|^2}$ were used for the second and last terms, respectively.
\end{proof}
We used the scales that $\| |x|^\mu \mathcal{R}^\bot \Theta_m (t) \|_{L^q (\mathbb{R}^2)} = t^{-\gamma_q-m/2 + \mu/2} \| |x|^\mu \mathcal{R}^\bot \Theta_m (1) \|_{L^q (\mathbb{R}^2)}$.
From Lemma \ref{lemHM}, we see $|x|^\mu \mathcal{R}^\bot \Theta_m \in L^\infty (\mathbb{R}^2)$ for $0 \le \mu \le 2$ since $\Theta_m \in \mathcal{S} (\mathbb{R}^2)$.
For $\mathcal{R}^\bot \theta$, we omitted the case $q = \infty$.
When this corollary is applied, the singularies as $t \to +0$ coming from $\Theta_1 \mathcal{R}^\bot \Theta_0, \Theta_0 \mathcal{R}^\bot \Theta_1$ and $\theta\mathcal{R}^\bot\theta$ should be remarked.

\vspace{3mm}

\noindent
\textbf{The phantoms.}
In this paper, we often encounter a `phantom' term.
For example, an $L^1$-function $\varphi$ satisfying $\varphi(-x) = - \varphi(x)$ fulfills that $\int_{\mathbb{R}^2} \varphi (x) dx = 0$.
More precisely,
\begin{equation}\label{phantom1}
	\int_{\mathbb{R}^2} x^\alpha \nabla^\beta G \mathcal{R}^\bot \nabla^\gamma G dx = 0
\end{equation}
when $|\alpha+\beta+\gamma|$ is even.
Such an integral could be vanishing even if $\varphi (-x) = \varphi (x)$.
For example, $\int_{\mathbb{R}^2} x_i (\partial_i \partial_j G \mathcal{R}_i G) dx = 0$ for $i \neq j$ since the integrand is odd in both of $x_i$ and $x_j$.
For other examples,
\begin{equation}\label{phantom2}
	\int_{\mathbb{R}^2} f \mathcal{R}^\bot f dx = \int_{\mathbb{R}^2} (f\mathcal{R}^\bot g + g\mathcal{R}^\bot f) dx = 0
\end{equation}
for $f,g \in L^2 (\mathbb{R}^2)$ since $\mathcal{R}^\bot$ is skew-adjoint, i.e., $\langle f,\mathcal{R}^\bot g \rangle = - \langle \mathcal{R}^\bot f, g \rangle$.
The most distinctive example for orthogonal operator is that $\nabla g * \nabla^\bot f$ and $\langle \nabla f, \nabla^\bot g \rangle$ are vanishing since $\nabla^\bot \cdot \nabla = 0$.
Due to spatial structures, such phantoms frequently appear when dealing with the quasi-geostrophic equation and we will see that they are useful to estimate some tricky quantities.
In particular, they will cancel out apparent singularities and improve the decay-rates.
We use them firstly to show the following proposition.

This proposition says that the logarithmic evolutions on Proposition \ref{prop-qg} and Corollary \ref{cor-asymplow-wt} are false.
Indeed, in this proposition, $\Theta_2$ has no logarithm.
\begin{proposition}\label{prop-asympmid-wt}
Let $\theta_0 \in L^1 (\mathbb{R}^2) \cap L^\infty (\mathbb{R}^2)$ and $|x|^3 \theta_0 \in L^1 (\mathbb{R}^2)$.
Then
\[
	\| |x|^\mu (\theta - \Theta_0 - \Theta_1 -\Theta_2)(t) \|_{L^q (\mathbb{R}^2)} \le C t^{-\gamma_q-1+\mu/2} (1+t)^{-1/2} \log (2+t)
\]
holds for $0 \le \mu \le 1$ and $1 \le q \le \infty$.
\end{proposition}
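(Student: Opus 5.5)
We start from the Duhamel formula
\[
\theta(t)=e^{t\Delta}\theta_0-\int_0^t \nabla G(t-s)*(\theta\mathcal{R}^\bot\theta)(s)\,ds,
\]
which uses $\nabla\cdot\mathcal{R}^\bot\theta=0$, so that $\mathcal{R}^\bot\theta\cdot\nabla\theta=\nabla\cdot(\theta\mathcal{R}^\bot\theta)$. We split the nonlinear term as
\[
\theta\mathcal{R}^\bot\theta=\Theta_0\mathcal{R}^\bot\Theta_0+(\theta\mathcal{R}^\bot\theta-\Theta_0\mathcal{R}^\bot\Theta_0).
\]
The linear part $e^{t\Delta}\theta_0$ is expanded by Taylor's formula for $G(t,x-y)$ up to second order in $y$. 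This gives $\Theta_0+\Theta_1$, the moment part of $\Theta_2$, and a remainder. Using $|x|^3\theta_0\in L^1$, the standard weighted estimate bounds the remainder by $Ct^{-\gamma_q-3/2+\mu/2}$.

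\textbf{The term $\Theta_0\mathcal{R}^\bot\Theta_0$ vanishes.} Since $G$ is radial, $\mathcal{R}^\bot G=\nabla^\bot(-\Delta)^{-1/2}G$ is tangential, while $\nabla G$ is radial. Hence $\nabla\cdot(G\mathcal{R}^\bot G)=\nabla G\cdot\mathcal{R}^\bot G=0$ pointwise. This is exactly the phantom mechanism that kills the expected logarithmic term $K_1\log t$ and the profile $J_1$, and it is why no logarithm appears at order $t^{-\gamma_q-1}$.

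\textbf{The remaining Duhamel term.} Write $N(s)=\theta\mathcal{R}^\bot\theta-\Theta_0\mathcal{R}^\bot\Theta_0$. By \eqref{bsbilin}, Proposition \ref{prop-qg} and \eqref{decay-wt},
\[
\||x|^\mu N(s)\|_{L^1}\le Cs^{-3/2+\mu/2}\log(2+s).
\]
We improve this by noting that $\Theta_0\mathcal{R}^\bot\Theta_1+\Theta_1\mathcal{R}^\bot\Theta_0$ has vanishing integral by \eqref{phantom2}, and that $\int N\,dx=0$ as well. The integral $\int_0^\infty\!\int y\,N(s,y)\,dy\,ds$ converges, since the integrand is $O(s^{-3/2}\log s)$ at infinity and integrable near $0$ (Lemma \ref{lem-discuss}-type reasoning handles the weak singularity of $\Theta_0\mathcal{R}^\bot\Theta_0$ at $s\to0$).

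We then split $\int_0^t=\int_0^{t/2}+\int_{t/2}^t$.
\begin{itemize}
\item On $(0,t/2)$, we Taylor expand $\nabla G(t-s,x-y)$ around $(t,x)$. The zeroth order term vanishes since $\int N=0$. The first order in $y$ gives
\[
\nabla\cdot\Bigl(\nabla\cdot\Bigl(\int_0^{\infty}\!\!\int yN\Bigr)\Bigr)G(t),
\]
which is exactly the nonlinear part of $M_2$ once the index bookkeeping with $\mathcal{R}^\bot=(-\mathcal{R}_2,\mathcal{R}_1)$ is done. The tail $\int_{t/2}^\infty$ is $O(t^{-\gamma_q-3/2+\mu/2}\log t)$. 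The remainder involves $s\,\partial_t\nabla G$ and $y^2\nabla^3G$ against $N$. The $y^2$ part uses $\||x|^2N(s)\|_{L^1}\le C\log(2+s)$, integrated up to $t/2$ to give $t\log t$, times $t^{-\gamma_q-2+\mu/2}$. This yields the claimed $t^{-\gamma_q-1+\mu/2}(1+t)^{-1/2}\log$ bound.
\item On $(t/2,t)$, we use the $L^p$--$L^q$ estimates for $\nabla G(t-s)$ together with $\||x|^\mu N(s)\|_{L^r}\le Cs^{-\gamma_r-3/2+\mu/2}\log s$. The $|x|^\mu$ weight is split as $|x-y|^\mu+|y|^\mu$.
\end{itemize}
For small $t$, the bound follows directly from \eqref{decay-wt}.

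\textbf{Main obstacle.} The hard part is the sharp $L^1$-weighted control of $N$ for $q=\infty$ and $q=1$, where Lemma \ref{lemRz} fails. We handle it by placing $\mathcal{R}^\bot$ on $L^p$ factors with $1<p<\infty$ and using Lemma \ref{lemHM} on $\mathcal{R}^\bot\Theta_m$, as in Corollary \ref{cor-asymplow-wt}.
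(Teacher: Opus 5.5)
There is a genuine gap: every error estimate in your proposal is carried out with $N=\theta\mathcal{R}^\bot\theta-\Theta_0\mathcal{R}^\bot\Theta_0$, and $N$ is half an order too large for the claimed rate. $N$ contains $P=\Theta_0\mathcal{R}^\bot\Theta_1+\Theta_1\mathcal{R}^\bot\Theta_0$. By scaling, $\|P(s)\|_{L^q(\mathbb{R}^2)}=c\,s^{-\gamma_q-3/2}$ and $\||y|^2P(s)\|_{L^1(\mathbb{R}^2)}=c\,s^{-1/2}$, with $c\neq0$ in general, so no size estimate of $N$ can do better than this.

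Consequently, your Taylor remainder on $(0,t/2)$ is only bounded by $Ct^{-\gamma_q-3/2}\int_0^{t/2}(s\|N(s)\|_{L^1}+\||y|^2N(s)\|_{L^1})\,ds\le Ct^{-\gamma_q-1}$. The piece on $(t/2,t)$ is likewise only bounded by $C\int_{t/2}^t(t-s)^{-1/2}s^{-\gamma_q-3/2}\,ds\le Ct^{-\gamma_q-1}$. For $\mu=0$ this is no improvement over Proposition \ref{prop-qg}, whereas the statement requires $t^{-\gamma_q-3/2}\log t$.

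Your own computation already shows the shortfall. You get $t\log t\cdot t^{-\gamma_q-2+\mu/2}=t^{-\gamma_q-1+\mu/2}\log t$, which misses $t^{-\gamma_q-1+\mu/2}(1+t)^{-1/2}\log(2+t)$ by a factor $t^{1/2}$. Two of the inputs are also off: $\||x|^\mu\nabla^3G(t)\|_{L^q}=Ct^{-\gamma_q-3/2+\mu/2}$, not $t^{-\gamma_q-2+\mu/2}$, and $\||x|^2N(s)\|_{L^1}\le C\log(2+s)$ is inconsistent with your own bound $s^{-3/2+\mu/2}\log(2+s)$.

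The missing idea is that the radial/tangential cancellation you used for $\Theta_0\mathcal{R}^\bot\Theta_0$ also kills $P$. Since $\mathcal{R}^\bot$ commutes with derivatives, $P=M_0(M_1\cdot\nabla)(G\mathcal{R}^\bot G)$. Hence $\nabla e^{(t-s)\Delta}\cdot P(s)=M_0e^{(t-s)\Delta}(M_1\cdot\nabla)\nabla\cdot(G\mathcal{R}^\bot G)(s)=0$. Moreover, $\int P\,dy=0$ by \eqref{phantom2} and $\int yP\,dy=0$ by the parity identity \eqref{phantom1}. So $P$ contributes nothing to the Duhamel term or to the zeroth- and first-order Taylor terms, and therefore nothing to the remainder.

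You must therefore replace $N$ by $N_1=\theta\mathcal{R}^\bot\theta-\sum_{m_1+m_2=0}^1\Theta_{m_1}\mathcal{R}^\bot\Theta_{m_2}$ in all error terms before taking norms. Corollary \ref{cor-asymplow-wt} gives $\||y|^\mu N_1(s)\|_{L^q}=O(s^{-\gamma_q-2+\mu/2}\log s)$, and this yields $O(t^{-\gamma_q-3/2}(\log t)^2)$.

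To reach a single logarithm you then need a bootstrap, which your proposal does not address. The $(\log t)^2$ bound gives $\|(\theta-\Theta_0-\Theta_1)(t)\|_{L^q}=O(t^{-\gamma_q-1})$, which removes the logarithm in Corollary \ref{cor-asymplow-wt}. Rerunning the estimates then leaves only the $\log t$ produced by $\int_1^t s^{-1}\,ds$ in the Taylor remainders.

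Finally, the convergence of $\int_0^\infty\!\int yN\,dy\,ds$ and the $O(t^{-1/2}\log t)$ bound on its tail rest precisely on $\int yP\,dy=0$. Without it, $\int yP(s,y)\,dy=s^{-1}\int yP(1,y)\,dy$ would be a pseudo-logarithm. The vanishing of $\int P\,dy$ that you cite is not what makes this work.
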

\begin{proof}
The proof is similar as one of \cite[Proposition 2.1]{Ymmt25NARWA}.
We convert \eqref{qg} to the mild solution as follows:
\begin{equation}\label{ms}
	\theta (t) = e^{t\Delta} \theta_0 - \int_0^t \nabla e^{(t-s)\Delta}\cdot (\theta\mathcal{R}^\bot\theta) (s) ds.
\end{equation}
Here, we used the relation $\mathcal{R}^\bot \theta \cdot \nabla \theta = \nabla \cdot (\theta \mathcal{R}^\bot \theta)$.
The treatment for the term of initial-data is well known:
\[
	\biggl\| |x|^\mu \biggl(
	e^{t\Delta} \theta_0 - \sum_{|\alpha| = 0}^2 \frac{\nabla^\alpha G(t)}{\alpha!} \int_{\mathbb{R}^2} (-y)^\alpha \theta_0 (y) dy
	\biggr) \biggr\|_{L^q (\mathbb{R}^2)}
	\le C t^{-\gamma_q-1+\mu/2} (1+t)^{-1/2}.
\]
We find several parts of $\Theta_0, \Theta_1$ and $\Theta_2$ from here.
The nonlinear term is converted as
\begin{equation}\label{begin}
\begin{aligned}
	&\int_0^t \nabla e^{(t-s)\Delta} \cdot (\theta\mathcal{R}^\bot\theta) (s) ds\\
	&=
	\int_0^t \int_{\mathbb{R}^2} \left( \nabla G(t-s,x-y) - \nabla G(t,x) \right) \cdot (\theta\mathcal{R}^\bot\theta - \Theta_0\mathcal{R}^\bot\Theta_0) (s,y) dyds.
\end{aligned}
\end{equation}
In other words, the integrand is smaller than it appears.
Here, we omitted some phantoms.
Namely, \eqref{phantom2} yields $\int_{\mathbb{R}^2} (\theta\mathcal{R}^\bot\theta - \Theta_0\mathcal{R}^\bot\Theta_0) dy = 0$.
Particularly, the logarithm $\int_{\mathbb{R}^2} (\Theta_0\mathcal{R}^\bot\Theta_0) (s,y) dy = s^{-1} \int_{\mathbb{R}^2}$ $(\Theta_0\mathcal{R}^\bot\Theta_0) (1,y) dy$ as in \eqref{expcd} does not appear.
Another phantom $\nabla e^{(t-s)\Delta}\cdot (\Theta_0\mathcal{R}^\bot\Theta_0) (s)$ is hidden since $\Theta_0$ is radially symmetric and then $\nabla\cdot (\Theta_0 \mathcal{R}^\bot \Theta_0) = 0$.
We further expand the nonlinear term through Escobedo--Zuazua theory together with the renormalization.
This coupling originally developed in \cite{KtM} for the Burgers equation.
Hence, applying Taylor expansion into \eqref{begin} yields that
\begin{equation}\label{bs1}
\begin{aligned}
	&-\int_0^t \nabla e^{(t-s)\Delta}\cdot (\theta\mathcal{R}^\bot\theta) (s) ds
	=
	\sum_{|\beta|=1} \nabla^\beta \nabla G(t) \cdot \int_0^t \int_{\mathbb{R}^2} y^\beta (\theta\mathcal{R}^\bot\theta - \Theta_0\mathcal{R}^\bot\Theta_0) (s,y) dyds\\
	&- \int_0^t \int_{\mathbb{R}^2} \biggl( \nabla G(t-s,x-y) - \sum_{|\beta|=0}^1 \nabla^\beta \nabla G(t,x) (-y)^\beta \biggr) \cdot (\theta\mathcal{R}^\bot\theta - \Theta_0\mathcal{R}^\bot\Theta_0) (s,y) dyds.
\end{aligned}
\end{equation}
The coefficient of first term is rewritten as
\[
\begin{aligned}
	&\int_0^t \int_{\mathbb{R}^2} y^\beta (\theta\mathcal{R}^\bot\theta - \Theta_0\mathcal{R}^\bot\Theta_0) (s,y) dyds\\
	&=
	\int_0^\infty \int_{\mathbb{R}^2} y^\beta (\theta\mathcal{R}^\bot\theta - \Theta_0\mathcal{R}^\bot\Theta_0)
	(s,y) dyds
	- \int_t^\infty \int_{\mathbb{R}^2} y^\beta \biggl( \theta\mathcal{R}^\bot\theta - \sum_{m_1+m_2=0}^1 \Theta_{m_1} \mathcal{R}^\bot\Theta_{m_2} \biggr) (s,y) dyds
\end{aligned}
\]
since \eqref{phantom1} says that $\int_{\mathbb{R}^2} y^\beta (\Theta_1 \mathcal{R}^\bot \Theta_0) dy = \int_{\mathbb{R}^2} y^\beta (\Theta_0 \mathcal{R}^\bot \Theta_1) dy = 0$ for $|\beta| = 1$, and then Corollary \ref{cor-asymplow-wt} guarantee that
\[
\begin{aligned}
	&\int_0^\infty \int_{\mathbb{R}^2} y^\beta (\theta\mathcal{R}^\bot\theta - \Theta_0\mathcal{R}^\bot\Theta_0) (s,y) dyds
	= \int_0^\infty \int_{\mathbb{R}^2} y^\beta \biggl( \theta\mathcal{R}^\bot\theta - \sum_{m_1+m_2=0}^1 \Theta_{m_1} \mathcal{R}^\bot\Theta_{m_2} \biggr) (s,y) dyds
\end{aligned}
\]
and $M_2$ converge.
The second term of \eqref{bs1} is further renormalized as
\[
\begin{aligned}
	&\int_0^t \int_{\mathbb{R}^2} \biggl( \nabla G(t-s,x-y) - \sum_{|\beta|=0}^1 \nabla^\beta \nabla G(t,x) (-y)^\beta \biggr) \cdot (\theta\mathcal{R}^\bot\theta - \Theta_0\mathcal{R}^\bot\Theta_0) (s,y) dyds\\
	&=\sum_{m_1+m_2=1} \int_0^t \int_{\mathbb{R}^2} \biggl( \nabla G(t-s,x-y) - \sum_{|\beta|=0}^1 \nabla^\beta \nabla G(t,x) (-y)^\beta \biggr) \cdot (\Theta_{m_1}\mathcal{R}^\bot\Theta_{m_2}) (s,y) dyds\\
	&+\int_0^t \int_{\mathbb{R}^2} \biggl( \nabla G(t-s,x-y) - \sum_{|\beta|=0}^1 \nabla^\beta \nabla G(t,x) (-y)^\beta \biggr)
	\cdot \biggl( \theta\mathcal{R}^\bot\theta - \sum_{m_1+m_2=0}^1 \Theta_{m_1} \mathcal{R}^\bot\Theta_{m_2} \biggr) (s,y) dyds.
\end{aligned}
\]
Here, the first part of this is vanishing.
Indeed, the phantoms are handled as above and
\eqref{phantom2} provides $\int_{\mathbb{R}^2} (\Theta_0\mathcal{R}^\bot\Theta_1 + \Theta_1\mathcal{R}^\bot\Theta_0) dy = 0$.
For another term, we see that
\[
\begin{aligned}
	&\nabla e^{(t-s)\Delta}\cdot  (\Theta_1\mathcal{R}^\bot\Theta_0+\Theta_0\mathcal{R}^\bot\Theta_1) (s)
	=
	M_0 e^{(t-s)\Delta}(M_1\cdot\nabla) \nabla \cdot (G\mathcal{R}^\bot G) (s) = 0.
\end{aligned}
\]
Applying them into \eqref{bs1} yields that
\[
\begin{aligned}
	&-\int_0^t \nabla e^{(t-s)\Delta}\cdot  (\theta\mathcal{R}^\bot\theta) (s) ds
	=
	\sum_{|\beta|=1} \nabla^\beta \nabla G(t) \cdot \int_0^\infty \int_{\mathbb{R}^2} y^\beta ( \theta\mathcal{R}^\bot\theta - \Theta_0\mathcal{R}^\bot\Theta_0 ) (s,y) dyds + r_3(t)
\end{aligned}
\]
for
\begin{equation}\label{r3}
\begin{aligned}
	&r_3 (t) = -\sum_{|\beta|=1} \nabla^\beta \nabla G(t) \cdot \int_t^\infty \int_{\mathbb{R}^2} y^\beta \biggl( \theta\mathcal{R}^\bot\theta - \sum_{m_1+m_2=0}^1 \Theta_{m_1}\mathcal{R}^\bot\Theta_{m_2} \biggr) (s,y) dyds\\
	&-\int_0^t \int_{\mathbb{R}^2} \biggl( \nabla G(t-s,x-y) - \sum_{|\beta|=0}^1 \nabla^\beta \nabla G(t,x) (-y)^\beta \biggr)
    \cdot \biggl( \theta\mathcal{R}^\bot\theta - \sum_{m_1+m_2=0}^1 \Theta_{m_1}\mathcal{R}^\bot\Theta_{m_2} \biggr) (s,y) dyds.
\end{aligned}
\end{equation}
Here, we complete the coefficient $M_2$ of $\Theta_2$ and see that $\theta = \Theta_0 + \Theta_1 + \Theta_2 + r_3^0 + r_3$ for
\[
	r_3^0 (t) = \int_{\mathbb{R}^2} \biggl( G(t,x-y) - \sum_{|\alpha| = 0}^2 \frac{\nabla^\alpha G(t)}{\alpha!} (-y)^\alpha \biggr) \theta_0 (y) dy.
\]
We show below that $\| r_3 (t) \|_{L^q (\mathbb{R}^2)} = O (t^{-\gamma_q-3/2} (\log t)^2)$ as $t \to + \infty$ for $1 \le q \le \infty$ from Taylor theorem with Corollary \ref{cor-asymplow-wt}.
Then, at this point, we see that
\[
	\| (\theta - \Theta_0 - \Theta_1 - \Theta_2) (t) \|_{L^q (\mathbb{R}^2)} = O (t^{-\gamma_q-3/2} (\log t)^2)
\]
as $t \to + \infty$.
Indeed, the coefficient of the first part of $r_3$ is estimated as
\[
	\biggl| \int_t^\infty \int_{\mathbb{R}^2} y^\beta \biggl( \theta\mathcal{R}^\bot\theta - \sum_{m_1+m_2=0}^1 \Theta_{m_1} \mathcal{R}^\bot\Theta_{m_2} \biggr) (s,y) dyds \biggr|
	\le
	C \int_t^\infty s^{-3/2} \log s ds
	=
	O (t^{-1/2} \log t)
\]
as $t \to +\infty$.
The second part of $r_3$ is separated and converted as
\[
\begin{aligned}
	&\int_0^t \int_{\mathbb{R}^2} \biggl( \nabla G(t-s,x-y) - \sum_{|\beta|=0}^1 \nabla^\beta \nabla G(t,x) (-y)^\beta \biggr)
	\cdot \biggl( \theta\mathcal{R}^\bot\theta - \sum_{m_1+m_2=0}^1 \Theta_{m_1}\mathcal{R}^\bot\Theta_{m_2} \biggr) (s,y) dyds\\
	&=\int_0^{t/2} \int_{\mathbb{R}^2} ( \nabla G(t-s,x-y) - \nabla G(t,x-y) )
	\cdot \biggl( \theta\mathcal{R}^\bot\theta - \sum_{m_1+m_2=0}^1 \Theta_{m_1}\mathcal{R}^\bot\Theta_{m_2} \biggr) (s,y) dyds\\
	&+ \int_0^t \int_{\mathbb{R}^2} \biggl( \nabla G(t,x-y) - \sum_{|\beta|=0}^1 \nabla^\beta \nabla G(t,x) (-y)^\beta \biggr)
	\cdot \biggl( \theta\mathcal{R}^\bot\theta - \sum_{m_1+m_2=0}^1 \Theta_{m_1}\mathcal{R}^\bot\Theta_{m_2} \biggr) (s,y) dyds\\
	&+\int_{t/2}^t \int_{\mathbb{R}^2} ( \nabla G(t-s,x-y) - \nabla G(t,x-y) )
	\cdot \biggl( \theta\mathcal{R}^\bot\theta - \sum_{m_1+m_2=0}^1 \Theta_{m_1}\mathcal{R}^\bot\Theta_{m_2} \biggr) (s,y) dyds\\
	&= \int_0^{t/2} \int_{\mathbb{R}^2} \int_0^1 \partial_t \nabla G(t-\lambda s, x-y) \cdot (-s)\biggl( \theta\mathcal{R}^\bot\theta - \sum_{m_1+m_2=0}^1 \Theta_{m_1}\mathcal{R}^\bot\Theta_{m_2} \biggr) (s,y) d\lambda dyds\\
	&+ \sum_{|\beta|=2} \int_0^t \int_{\mathbb{R}^2} \int_0^1 \frac{\nabla^\beta \nabla G(t,x-\lambda y)}{\beta!} (1-\lambda) \cdot (-y)^\beta \biggl( \theta\mathcal{R}^\bot\theta - \sum_{m_1+m_2=0}^1 \Theta_{m_1}\mathcal{R}^\bot\Theta_{m_2} \biggr) (s,y) d\lambda dyds\\
	&+\int_{t/2}^t \int_{\mathbb{R}^2} ( \nabla G(t-s,x-y) - \nabla G(t,x-y) )
	\cdot \biggl( \theta\mathcal{R}^\bot\theta - \sum_{m_1+m_2=0}^1 \Theta_{m_1}\mathcal{R}^\bot\Theta_{m_2} \biggr) (s,y) dyds.
\end{aligned}
\]
Therefore, Hausdorff--Young inequality together with Corollary \ref{cor-asymplow-wt} provide that
\[
\begin{aligned}
	&\biggl\| \int_0^t \int_{\mathbb{R}^2} \biggl( \nabla G(t-s,x-y) - \sum_{|\beta|=0}^1 \nabla^\beta \nabla G(t,x) (-y)^\beta \biggr)\\
	 &\hspace{15mm}
	\cdot \biggl( \theta\mathcal{R}^\bot\theta - \sum_{m_1+m_2=0}^1 \Theta_{m_1}\mathcal{R}^\bot\Theta_{m_2} \biggr) (s,y) dyds \biggr\|_{L^q (\mathbb{R}^2)}\\
	&\le C\int_0^{t/2} \int_0^1 (t-\lambda s)^{-\gamma_q-3/2} s^{-1/2} (1+s)^{-1/2} \log (2+s) d\lambda ds\\
	&+ C t^{-\gamma_q-3/2} \int_0^t s^{-1/2} (1+s)^{-1/2} \log (2+s)ds\\
	&+C\int_{t/2}^t ((t-s)^{-1/2} + t^{-1/2}) s^{-\gamma_q-2} \log s ds
	 = O(t^{-\gamma_q-3/2} (\log t)^2)
\end{aligned}
\]
as $t \to +\infty$.
Here, the logarithms in Corollary \ref{cor-asymplow-wt} generated the extra logarithm $(\log t)^2$.
By the way,
\[
	\| (\theta - \Theta_0 - \Theta_1) (t) \|_{L^q (\mathbb{R}^2)} \le \| (\theta - \Theta_0 - \Theta_1 -\Theta_2) (t) \|_{L^q (\mathbb{R}^2)} + \| \Theta_2 (t) \|_{L^q (\mathbb{R}^2)} = O (t^{-\gamma_q-1})
\]
as $t \to +\infty$ and, thus, the logarithmic evolution is removed from the sharp estimate in Proposition \ref{prop-qg}.
Therefore,
\[
	\| (\theta - \Theta_0 - \Theta_1) (t) \|_{L^q (\mathbb{R}^2)}
	\le
	C t^{-\gamma_q-1/2} (1+t)^{-1/2}
\]
holds and, then, the logarithms on Corollary \ref{cor-asymplow-wt} also are removed:
\begin{equation}\label{corr-cor-asymplow}
	\biggl\| |x|^\mu \biggl( \theta\mathcal{R}^\bot\theta - \sum_{m_1+m_2=0}^1 \Theta_{m_1} \mathcal{R}^\bot \Theta_{m_2} \biggr) (t) \biggr\|_{L^q (\mathbb{R}^2)}
	=
	O (t^{-\gamma_q-2+\mu/2})
\end{equation}
as $t \to +\infty$ for $1 \le q < \infty$ and $0 \le \mu \le 2$.
Applying this instead of Corollary \ref{cor-asymplow-wt} in the above procedure, we see that $\| r_3 (t) \|_{L^q (\mathbb{R}^2)} = O (t^{-\gamma_q-3/2} \log t)$ as $t \to +\infty$ and confirm the assertion with $\mu = 0$.
Next, we show one with $\mu = 1$. 
Since $\| |x| (\theta-\Theta_0-\Theta_1-\Theta_2) \|_{L^q (\mathbb{R}^2)} \le \| |x| r_3^0 \|_{L^q (\mathbb{R}^2)} + \| |x| r_3 \|_{L^q (\mathbb{R}^2)}$ and the linear part fulfills that $\| |x| r_3^0 (t) \|_{L^q (\mathbb{R}^2)} = O (t^{-\gamma_q-1})$ as $t \to +\infty$, we estimate $\| |x|r_3 \|_{L^q (\mathbb{R}^2)}$.
The first term of $r_3$ is handled easily by using \eqref{corr-cor-asymplow}.
We separate the domain of second term as $(0,t)\times\mathbb{R}^2 = Q_1 \cup Q_2 = Q_3 \cup Q_4$ for
\[
\begin{aligned}
	&Q_1 = (0,t/2] \times \mathbb{R}^2\quad\text{and}\quad
	Q_2 = (t/2,t) \times \mathbb{R}^2,
\end{aligned}
\]
and
\[
\begin{aligned}
	&Q_3 = (0,t) \times \{ y \in \mathbb{R}^2 \mid |y| \le |x|/2 \}\quad\text{and}\quad
	Q_4 = (0,t) \times \{ y \in \mathbb{R}^2 \mid |y| > |x|/2 \}.
\end{aligned}
\]
Then the second term of $r_3$ also is separated as
\[
\begin{aligned}
	&\int_0^t \int_{\mathbb{R}^2} \biggl( \nabla G(t-s,x-y) - \sum_{|\beta|=0}^1 \nabla^\beta \nabla G(t,x) (-y)^\beta \biggr)
	\cdot \biggl( \theta\mathcal{R}^\bot\theta - \sum_{m_1+m_2=0}^1 \Theta_{m_1}\mathcal{R}^\bot\Theta_{m_2} \biggr) (s,y) dyds\\
	&=
	r_3^1 + r_3^2 + r_3^3 + r_3^4
\end{aligned}
\]
for
\[
\begin{aligned}
	&r_3^h (t)
	=
	-\iint_{Q_h} 
		(\nabla G(t-s,x-y) - \nabla G(t,x-y))
		\cdot \biggl( \theta\mathcal{R}^\bot\theta - \sum_{m_1+m_2=0}^1 \Theta_{m_1}\mathcal{R}^\bot\Theta_{m_2} \biggr) (s,y)
	dyds
\end{aligned}
\]
for $h = 1$ and $2$, and
\[
\begin{aligned}
	&r_3^h (t)
	=
	-\iint_{Q_h}
		\biggl( \nabla G(t,x-y) - \sum_{|\beta|=0}^1 \nabla^\beta \nabla G(t,x) (-y)^\beta \biggr)\\
		&\hspace{15mm}\cdot \biggl( \theta\mathcal{R}^\bot\theta - \sum_{m_1+m_2=0}^1 \Theta_{m_1}\mathcal{R}^\bot\Theta_{m_2} \biggr) (s,y)
	dyds
\end{aligned}
\]
for $h = 3$ and $4$.
Taylor theorem with \eqref{corr-cor-asymplow} yields for $r_3^1$ and $r_3^3$ that
\[
\begin{aligned}
	&\| |x| r_3^1 (t) \|_{L^q (\mathbb{R}^2)}\\
	&\le
	C\int_0^{t/2} \int_0^1 \| |x| \partial_t \nabla G(t-\lambda s) \|_{L^q (\mathbb{R}^2)} s \biggl\| \biggl( \theta\mathcal{R}^\bot\theta - \sum_{m_1+m_2=0}^1 \Theta_{m_1}\mathcal{R}^\bot\Theta_{m_2} \biggr) (s) \biggr\|_{L^1 (\mathbb{R}^2)} d\lambda ds\\
	&+
	C\int_0^{t/2} \int_0^1 \| \partial_t \nabla G(t-\lambda s) \|_{L^q (\mathbb{R}^2)} s \biggl\| |y| \biggl( \theta\mathcal{R}^\bot\theta - \sum_{m_1+m_2=0}^1 \Theta_{m_1}\mathcal{R}^\bot\Theta_{m_2} \biggr) (s) \biggr\|_{L^1 (\mathbb{R}^2)} d\lambda ds\\
	&\le
	C \int_0^{t/2} \int_0^1 (t-\lambda s)^{-\gamma_q-1} s^{-1/2} \left( (1+s)^{-1/2} + (t-\lambda s)^{-1/2} \right) d\lambda ds
	\le
	C t^{-\gamma_q-1} \log (2+t)
\end{aligned}
\]
and
\[
\begin{aligned}
	&\| |x| r_3^3 (t) \|_{L^q (\mathbb{R}^2)}
	\le
	C \sum_{|\beta|=2} \| |x| \nabla^\beta \nabla G(t) \|_{L^q (\mathbb{R}^2)} \int_0^t \biggl\| y^\beta \biggl( \theta\mathcal{R}^\bot\theta - \sum_{m_1+m_2=0}^1 \Theta_{m_1}\mathcal{R}^\bot\Theta_{m_2} \biggr) (s) \biggr\|_{L^1 (\mathbb{R}^2)} d\lambda ds\\
	&\le
	C t^{-\gamma_q-1} \int_0^t s^{-1/2} (1+s)^{-1/2} ds
	\le
	C t^{-\gamma_q-1} \log (2+t).
\end{aligned}
\]
Taylor theorem performs partially for $r_3^4$ as
\[
\begin{aligned}
	&\| |x| r_3^4 (t) \|_{L^q (\mathbb{R}^2)}
	\le
	C \sum_{|\beta|=1} \| \nabla^\beta \nabla G(t) \|_{L^q (\mathbb{R}^2)} \int_0^t \biggl\| |y| y^\beta \biggl( \theta\mathcal{R}^\bot\theta - \sum_{m_1+m_2=0}^1 \Theta_{m_1}\mathcal{R}^\bot\Theta_{m_2} \biggr) (s) \biggr\|_{L^1 (\mathbb{R}^2)}
	ds\\
	&\le
	C t^{-\gamma_q-1} \log (2+t)
\end{aligned}
\]
for $1 \le q \le \infty$.
The remainder term $r_3^2$ is estimated directly as
\[
\begin{aligned}
	&\| |x| r_3^2 (t) \|_{L^q (\mathbb{R}^2)}
	\le
	C \int_{t/2}^t (\| |x| \nabla G(t-s) \|_{L^1 (\mathbb{R}^2)} + \| |x| \nabla G(t) \|_{L^1 (\mathbb{R}^2)})\\
	 &\hspace{15mm}
	\biggl\| \biggl( \theta\mathcal{R}^\bot\theta - \sum_{m_1+m_2=0}^1 \Theta_{m_1}\mathcal{R}^\bot\Theta_{m_2} \biggr) (s) \biggr\|_{L^q (\mathbb{R}^2)}
	ds\\
	&+C \int_{t/2}^t (\| \nabla G(t-s) \|_{L^1 (\mathbb{R}^2)} + \| \nabla G(t) \|_{L^1 (\mathbb{R}^2)})
	\biggl\| |x| \biggl( \theta\mathcal{R}^\bot\theta - \sum_{m_1+m_2=0}^1 \Theta_{m_1}\mathcal{R}^\bot\Theta_{m_2} \biggr) (s) \biggr\|_{L^q (\mathbb{R}^2)}
	ds\\
	&\le
	C \int_{t/2}^t s^{-\gamma_q-3/2} (1+s)^{-1/2} ds
	 + C \int_{t/2}^t ((t-s)^{-1/2} + t^{-1/2}) s^{-\gamma_q-1} (1+s)^{-1/2} ds
	\le
	C t^{-\gamma_q-1}
\end{aligned}
\]
for $1 \le q < \infty$ and
\[
\begin{aligned}
	&\| |x| r_3^2 (t) \|_{L^\infty (\mathbb{R}^2)}\\
	&\le
	C \int_{t/2}^t (\| |x| \nabla G(t-s) \|_{L^p (\mathbb{R}^2)} + \| |x| \nabla G(t) \|_{L^p (\mathbb{R}^2)})
	\biggl\| \biggl( \theta\mathcal{R}^\bot\theta - \sum_{m_1+m_2=0}^1 \Theta_{m_1}\mathcal{R}^\bot\Theta_{m_2} \biggr) (s) \biggr\|_{L^{p'} (\mathbb{R}^2)}
	ds\\
	&+C \int_{t/2}^t (\| \nabla G(t-s) \|_{L^p (\mathbb{R}^2)} + \| \nabla G(t) \|_{L^p (\mathbb{R}^2)})
	\biggl\| |x| \biggl( \theta\mathcal{R}^\bot\theta - \sum_{m_1+m_2=0}^1 \Theta_{m_1}\mathcal{R}^\bot\Theta_{m_2} \biggr) (s) \biggr\|_{L^{p'} (\mathbb{R}^2)}
	ds\\
	&\le
	C \int_{t/2}^t \left( (t-s)^{-\gamma_p} + t^{-\gamma_p} \right) s^{-5/2+\gamma_p} (1+s)^{-1/2} ds\\
	& + C \int_{t/2}^t ((t-s)^{-\gamma_p-1/2} + t^{-\gamma_p-1/2}) s^{-2+\gamma_p} (1+s)^{-1/2} ds
	\le
	C t^{-2}
\end{aligned}
\]
for supplementary $1 < p < 2$.
Thus, we see that $\| |x| r_3 (t) \|_{L^q (\mathbb{R}^2)} = O (t^{-\gamma_q-1} \log t)$ as $t \to + \infty$ and, then, we confirm the assertion with $\mu = 1$ and complete the proof.
By the way, the singularity as $t \to +0$ comes from $\| |x|^\mu \Theta_2 (t) \|_{L^q (\mathbb{R}^2)} = t^{-\gamma_q-1+\mu/2} \| |x|^\mu \Theta_2 (1) \|_{L^q (\mathbb{R}^2)}$.
\end{proof}
In the derivation of Corollary \ref{cor-asymplow-wt}, if we use Proposition \ref{prop-asympmid-wt} instead of Proposition \ref{prop-qg}, we obtain the following estimate.
%
\begin{corollary}\label{cor-asympmid}
Let $\theta_0 \in L^1 (\mathbb{R}^2) \cap L^\infty (\mathbb{R}^2)$ and $|x|^3 \theta_0 \in L^1 (\mathbb{R}^2)$.
Then
\[
	\biggl\| |x|^\mu \biggl( \theta \mathcal{R}^\bot \theta - \sum_{m_1+m_2=0}^2 \Theta_{m_1} \mathcal{R}^\bot \Theta_{m_2} \biggr) (t) \biggr\|_{L^q (\mathbb{R}^2)}
	= O (t^{-\gamma_q-\frac52+\frac\mu2} \log t)
\]
as $t \to +\infty$ holds for $1 \le q < \infty$ and $0 \le \mu \le 3$.
\end{corollary}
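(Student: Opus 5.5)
Following the derivation of Corollary \ref{cor-asymplow-wt}, we decompose the bilinear remainder algebraically and estimate each piece with the weighted bounds we already have. Write $\Phi = \Theta_0 + \Theta_1 + \Theta_2$ and $w = \theta - \Phi$. Since $\sum_{m_1+m_2\le 2}\Theta_{m_1}\mathcal{R}^\bot\Theta_{m_2}$ is exactly $\Phi\mathcal{R}^\bot\Phi$ minus the terms with $m_1+m_2\ge 3$, we can write
\[
	\theta\mathcal{R}^\bot\theta - \sum_{m_1+m_2=0}^2 \Theta_{m_1}\mathcal{R}^\bot\Theta_{m_2}
	= \theta\mathcal{R}^\bot w + w\mathcal{R}^\bot \Phi + \sum_{\substack{m_1+m_2\ge 3\\ m_1,m_2\le 2}} \Theta_{m_1}\mathcal{R}^\bot\Theta_{m_2}.
\]
This is the analogue of \eqref{bsbilin}, and each of the three groups is bounded separately in weighted $L^q$ for $1\le q<\infty$.

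\textbf{The explicit terms.} These are exactly self-similar. By Lemma \ref{lemHM} applied to multipliers such as $\frac{\xi^\bot}{|\xi|}\xi^\beta e^{-t|\xi|^2}$, we have $|x|^\mu \mathcal{R}^\bot\Theta_m(1)\in L^\infty$ for $\mu\le 2+m$. This covers $\mu\le 3$ when $m\ge1$. It also covers $\mu\le 3$ when $m=0$, by taking $N=3$ in the lemma: the decay is then $|x|^{-3}$, so $|x|^3\mathcal{R}^\bot\Theta_0(1)$ is bounded. Pairing with the rapidly decaying partner $\Theta_{m_1}$, scaling gives $t^{-\gamma_q-(m_1+m_2)/2-1+\mu/2}$ with $m_1+m_2\ge3$. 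This is $O(t^{-\gamma_q-5/2+\mu/2})$.

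\textbf{The terms containing $w$.} Proposition \ref{prop-asympmid-wt} gives $\||x|^\nu w(t)\|_{L^q}\le Ct^{-\gamma_q-3/2+\nu/2}\log t$ for $0\le\nu\le1$. For $w\mathcal{R}^\bot\Phi$ we put the weight on whichever factor can carry it, using $|x|^\mu\le C(|x|^{\nu}\cdot|x|^{\mu-\nu})$ with $\nu\le1$ on $w$ and $\mu-\nu\le 2$ on $\mathcal{R}^\bot\Phi$. We then apply H\"older, with $L^\infty$ on the $\Phi$ factor, which decays like $t^{-1+(\mu-\nu)/2}$. This yields $t^{-\gamma_q-5/2+\mu/2}\log t$.

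For $\theta\mathcal{R}^\bot w$ we put the weight on $\theta$ via \eqref{decay-wt}, which is available up to $m=3$. The factor $\mathcal{R}^\bot w$ must be controlled in $L^r$ with $1<r<\infty$, so we use Lemma \ref{lemRz} together with H\"older, splitting the exponents as $1/q=1/p+1/r$, with $\theta$ in $L^p$ and $p$ possibly infinite. The unweighted norms $\|\theta\|_{L^p}$ and $\|\mathcal{R}^\bot w\|_{L^r}\le C\|w\|_{L^r}$ then combine to $t^{-\gamma_q-3/2}\log t\cdot t^{-1}$ by additivity of the $\gamma$'s. When the weight sits on $\theta$, \eqref{decay-wt} supplies the factor $t^{\mu/2}$, and we obtain the claimed rate.

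\textbf{Main obstacle: the case $q=1$.} With $\theta\mathcal{R}^\bot w$ at $q=1$, the factor $\mathcal{R}^\bot w$ would be needed in $L^1$, which Lemma \ref{lemRz} does not cover. We handle this as in the existing corollary by choosing $r=2$ and putting $\theta$ in $L^2$ (weighted as needed). The decay exponents still add correctly: $\gamma_2+\gamma_2=1=\gamma_1+\gamma_\infty$, so the rate is unchanged.

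Weights exceeding what $w$ can carry (its $\nu\le1$ limit) are always transferred to $\theta$ (up to $3$) or to $\Phi$ (up to $2$ on $\mathcal{R}^\bot\Phi$, more on $\Phi$ itself). The only delicate point is weights on $\mathcal{R}^\bot w$ itself, which we never need. Since $\mu\le3\le 3+1$, there is enough room in this split, and the singularity as $t\to+0$ is irrelevant because the claim concerns only $t\to+\infty$.
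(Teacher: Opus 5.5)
Your proof is correct and is essentially the paper's: the paper obtains this corollary by rerunning the derivation of Corollary \ref{cor-asymplow-wt} (the splitting \eqref{bsbilin}) with Proposition \ref{prop-asympmid-wt} in place of Proposition \ref{prop-qg}, which is what you do. As there, the weight goes on $\theta$ via \eqref{decay-wt}, on $\mathcal{R}^\bot\Theta_m$ via Lemma \ref{lemHM}, and at most $|x|^1$ goes on the remainder; your split $\theta\mathcal{R}^\bot w + w\mathcal{R}^\bot\Phi + \sum_{m_1+m_2\ge 3}\Theta_{m_1}\mathcal{R}^\bot\Theta_{m_2}$ differs from the telescoped version only cosmetically.

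One aside is false but harmless. $\mathcal{R}^\bot\Theta_0(1)$ decays exactly like $|x|^{-2}$, because its multiplier $\frac{\xi^\bot}{|\xi|}e^{-|\xi|^2}$ is homogeneous of degree $0$ at the origin, so Lemma \ref{lemHM} allows at most $\lambda=0$; hence $|x|^3\mathcal{R}^\bot\Theta_0(1)\notin L^\infty$. You never use this, since the explicit terms have $m_2\ge1$ and in $w\mathcal{R}^\bot\Phi$ you load at most $|x|^2$ on $\mathcal{R}^\bot\Phi$.
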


However, finally, the logarithms are removed completely.
We also remark the singularities of $|x|^\mu \theta$ and $|x|^\mu \Theta_2$ as $t \to +0$ when we apply this corollary.
	
\section{Proof of main theorem}\label{sect3}
We further expand $r_3$ introduced in \eqref{r3}.
Then,
\[
\begin{aligned}
	&r_3 (t) = - \sum_{m_1+m_2=2} \sum_{|\beta|=1} \nabla^\beta \nabla G(t) \cdot \int_t^\infty \int_{\mathbb{R}^2} y^\beta (\Theta_{m_1} \mathcal{R}^\bot \Theta_{m_2})(s,y) dyds\\
	&- \sum_{2l+|\beta|=2} \frac{\partial_t^l \nabla^\beta \nabla G(t)}{\beta!} \cdot \int_0^t \int_{\mathbb{R}^2}
		(-s)^l y^\beta \biggl( \theta\mathcal{R}^\bot\theta - \sum_{m_1+m_2=0}^1 \Theta_{m_1} \mathcal{R}^\bot \Theta_{m_2} \biggr) (s,y)
	dyds\\
	&-\sum_{|\beta| = 1} \nabla^\beta \nabla G(t) \cdot \int_t^\infty \int_{\mathbb{R}^2}
		y^\beta \biggl( \theta \mathcal{R}^\bot \theta - \sum_{m_1+m_2=0}^2 \Theta_{m_1} \mathcal{R}^\bot \Theta_{m_2} \biggr)(s,y)
	dyds\\
	&- \int_0^t \int_{\mathbb{R}^2}
		\biggl( \nabla G(t-s,x-y) - \sum_{2l+|\beta|=0}^2 \frac{\partial_t^l \nabla^\beta \nabla G(t,x)}{\beta!} (-s)^l (-y)^\beta \biggr)\\
		 &\hspace{15mm}
        \cdot
		\biggl( \theta\mathcal{R}^\bot\theta - \sum_{m_1+m_2=0}^1 \Theta_{m_1}\mathcal{R}^\bot\Theta_{m_2} \biggr) (s,y)
	dyds\\
	&= - \sum_{2l+|\beta|=2} \frac{\partial_t^l \nabla^\beta \nabla G(t)}{\beta!} \cdot \int_0^\infty \int_{\mathbb{R}^2}
		(-s)^l y^\beta \biggl( \theta\mathcal{R}^\bot\theta - \sum_{m_1+m_2=0}^1 \Theta_{m_1} \mathcal{R}^\bot \Theta_{m_2} \biggr) (s,y)
	dyds
	 + J_3^{\mathrm{L}} + J_3^{\mathrm{H}} + r_4
\end{aligned}
\]
for
\begin{equation}\label{J3L}
\begin{aligned}
	&J_3^{\mathrm{L}} (t) = -\sum_{m_1+m_2=2} \sum_{|\beta|=1} \nabla^\beta \nabla G(t) \cdot \int_t^\infty \int_{\mathbb{R}^2} y^\beta (\Theta_{m_1} \mathcal{R}^\bot \Theta_{m_2})(s,y) dyds
\end{aligned}
\end{equation}
and
\begin{equation}\label{J3H}
\begin{aligned}
	&J_3^{\mathrm{H}}
	=
	- \sum_{m_1+m_2 = 2} \int_0^t \int_{\mathbb{R}^2}
		\biggl( \nabla G(t-s,x-y) - \sum_{2l+|\beta|=0}^2 \frac{\partial_t^l \nabla^\beta \nabla G(t,x)}{\beta!} (-s)^l (-y)^\beta \biggr)\\
		 &\hspace{20mm}
		\cdot (\Theta_{m_1}\mathcal{R}^\bot\Theta_{m_2}) (s,y)
	dyds
\end{aligned}
\end{equation}
of the original forms, and
\[
\begin{aligned}
	&r_4 (t) = 
	-\sum_{2l+|\beta|=2} \frac{\partial_t^l \nabla^\beta \nabla G(t)}{\beta!} \cdot \int_t^\infty \int_{\mathbb{R}^2}
		(-s)^l y^\beta \biggl( \theta\mathcal{R}^\bot\theta - \sum_{m_1+m_2=0}^1 \Theta_{m_1} \mathcal{R}^\bot \Theta_{m_2} \biggr) (s,y)
	dyds\\
	&-\sum_{|\beta| = 1} \nabla^\beta \nabla G(t) \cdot \int_t^\infty \int_{\mathbb{R}^2}
		y^\beta\biggl( \theta \mathcal{R}^\bot \theta - \sum_{m_1+m_2=0}^2 \Theta_{m_1} \mathcal{R}^\bot \Theta_{m_2} \biggr)(s,y)
	dyds\\
	&-\int_0^t \int_{\mathbb{R}^2}
		\biggl( \nabla G(t-s,x-y) - \sum_{2l+|\beta|=0}^2 \frac{\partial_t^l \nabla^\beta \nabla G(t,x)}{\beta!} (-s)^l (-y)^\beta \biggr)\\
		 &\hspace{15mm}
        \cdot
		\biggl( \theta\mathcal{R}^\bot\theta - \sum_{m_1+m_2=0}^2 \Theta_{m_1}\mathcal{R}^\bot\Theta_{m_2} \biggr) (s,y)
	dyds.
\end{aligned}
\]
Here, Taylor theorem together with Lebesgue convergence theorem guarantees that $J_3^{\mathrm{H}} \in C((0,\infty),$ $L^1(\mathbb{R}^2))$.
This is required to use Lemma \ref{lem-discuss} to convert this $J_3 = J_3^{\mathrm{L}} + J_3^{\mathrm{H}}$ to the form \eqref{J3pre} later.
Now we vertified the expansion that
\[
	\theta = \Theta_0 + \Theta_1 + \Theta_2 + \Theta_3 + J_3 + r_4^0 + r_4
\]
for
\[
	r_4^0 (t) = \int_{\mathbb{R}^2} \biggl( G(t,x-y) - \sum_{|\alpha|=0}^3 \frac{\nabla^\alpha G(t,x)}{\alpha!} (-y)^\alpha \biggr) \theta_0 (y) dy
\]
and the above $J_3$ and $r_4$.
We firstly confirm that the coefficient of $\Theta_3$ converges.
Since \eqref{phantom1} says that
\[
	\int_{\mathbb{R}^2} (-y)^\beta (\Theta_{m_1} \mathcal{R}^\bot \Theta_{m_2}) dy = 0
\]
for $2l+|\beta|=2$ and $m_1 + m_2 = 2$, we see from Corollary \ref{cor-asympmid} that
\[
\begin{aligned}
	&\biggl| \int_0^\infty \int_{\mathbb{R}^2}
		(-s)^l (-y)^\beta \biggl( \theta\mathcal{R}^\bot\theta - \sum_{m_1+m_2=0}^1 \Theta_{m_1} \mathcal{R}^\bot \Theta_{m_2} \biggr) (s,y)
	dyds \biggr|\\
	&= \biggl| \int_0^\infty \int_{\mathbb{R}^2}
		(-s)^l (-y)^\beta \biggl( \theta\mathcal{R}^\bot\theta - \sum_{m_1+m_2=0}^2 \Theta_{m_1} \mathcal{R}^\bot \Theta_{m_2} \biggr) (s,y)
	dyds \biggr|\\
	&\le
	C \int_0^\infty s^{-1/2} (1+s)^{-1} \log (2+s) ds < \infty.
\end{aligned}
\]
Here, only the weak singularity $s^{-1/2}$ as $s \to +0$ is observed since $\Theta_{m_1} \mathcal{R}^\bot \Theta_{m_2}$ for $m_1 + m_2 = 2$ are phantoms.
For the same reason, the coefficient of first term of $r_4$ is treated as
\[
\begin{aligned}
	&\int_t^\infty \int_{\mathbb{R}^2}
		(-s)^l y^\beta \biggl( \theta\mathcal{R}^\bot\theta - \sum_{m_1+m_2=0}^1 \Theta_{m_1} \mathcal{R}^\bot \Theta_{m_2} \biggr) (s,y)
	dyds\\
	&=
	\int_t^\infty \int_{\mathbb{R}^2}
		(-s)^l y^\beta \biggl( \theta\mathcal{R}^\bot\theta - \sum_{m_1+m_2=0}^2 \Theta_{m_1} \mathcal{R}^\bot \Theta_{m_2} \biggr) (s,y)
	dyds
	= O(t^{-1/2} \log t)
\end{aligned}
\]
as $t \to +\infty$.
As Corollary \ref{cor-asymplow-wt} treats $r_3$ in the proof of Proposition \ref{prop-asympmid-wt}, Corollary \ref{cor-asympmid} also estimates the other terms of $r_4$ as
\begin{equation}\label{wtr4pre}
	\| |x|^\mu r_4 (t) \|_{L^q (\mathbb{R}^2)} = O (t^{-\gamma_q-2+\mu/2} (\log t)^2)
\end{equation}
and then
\[
	\| |x|^\mu (\theta-\Theta_0-\Theta_1-\Theta_2-\Theta_3-J_3) (t) \|_{L^q (\mathbb{R}^2)}
	=
	O (t^{-\gamma_q-2+\mu/2} (\log t)^2)
\]
as $t \to +\infty$ for $1 \le q \le \infty$ and $0 \le \mu \le 1$.
This removes the logarithms from 
Corollary \ref{cor-asympmid}, i.e.,
\[
	\biggl\| |x|^\mu \biggl( \theta \mathcal{R}^\bot \theta - \sum_{m_1+m_2=0}^2 \Theta_{m_1} \mathcal{R}^\bot \Theta_{m_2} \biggr) (t) \biggr\|_{L^q (\mathbb{R}^2)}
	= O (t^{-\gamma_q-\frac52+\frac\mu2})
\]
as $t \to +\infty$ for $1 \le q < \infty$ and $0 \le \mu \le 3$ holds.
Applying this fact instead of Corollary \ref{cor-asympmid} into the derivation process of \eqref{wtr4pre} mitigates the logarithmic evolution such as
\[
	\| r_4 (t) \|_{L^q (\mathbb{R}^2)} = O (t^{-\gamma_q-2} \log t)
\]
as $t \to +\infty$.
This is the desired sharp decay-rate.

At last, we convert the above original $J_3$ to the form \eqref{J3pre}.
Omitting the phantoms from \eqref{J3H} yields that
\begin{equation}\label{J3bs}
\begin{aligned}
	&J_3^{\mathrm{H}} (t) =
	- \sum_{m_1+m_2 = 2} \int_0^t \int_{\mathbb{R}^2}
		\biggl( \nabla G(t-s,x-y) + \sum_{|\beta|=1} \nabla^\beta \nabla G(t,x) y^\beta \biggr)\\
		 &\hspace{20mm}
		\cdot (\Theta_{m_1}\mathcal{R}^\bot\Theta_{m_2}) (s,y)
	dyds.
\end{aligned}
\end{equation}
We see from the representation
\begin{equation}\label{Str}
	\mathcal{R}^\bot \varphi = \frac{\sqrt{\pi}}{2\pi} \int_0^\infty \int_0^\infty \lambda^{-3/2} e^{-\frac1{4\lambda}} \nabla^\bot e^{\sigma^2\lambda\Delta} \varphi d\lambda d\sigma
\end{equation}
with the integration by parts that
\[
\begin{aligned}
	&\nabla e^{(t-s)\Delta}\cdot (\Theta_0\mathcal{R}^\bot\Theta_2) (s)
	=
	- \frac{\sqrt{\pi}}{2\pi} \int_0^\infty \int_0^\infty \lambda^{-3/2} e^{-\frac1{4\lambda}} \nabla e^{(t-s)\Delta}\cdot \left(\Theta_2 (s+\sigma^2\lambda) \nabla^\bot \Theta_0 (s)\right) d\lambda d\sigma
\end{aligned}
\]
since $\nabla^\bot \cdot \nabla = 0$.
For the idea of representation \eqref{Str}, see \cite{Str}.
Hence, the first part of \eqref{J3bs} is converted as
\begin{equation}\label{J3-1}
\begin{aligned}
	&\sum_{m_1+m_2=2} \nabla e^{(t-s)\Delta}\cdot  (\Theta_{m_1} \mathcal{R}^\bot \Theta_{m_2}) (s)\\
	&=
	\frac{\sqrt{\pi}}{2\pi} \int_0^\infty \int_0^\infty \lambda^{-3/2} e^{-\frac1{4\lambda}} \nabla e^{(t-s)\Delta}\cdot \left( \Theta_2 (s) \nabla^\bot \Theta_0 (s+\sigma^2\lambda) - \Theta_2 (s+\sigma^2\lambda) \nabla^\bot \Theta_0 (s) \right) d\lambda d\sigma\\
	&+
	\frac{\sqrt{\pi}}{2\pi}  \int_0^\infty \int_0^\infty \lambda^{-3/2} e^{-\frac1{4\lambda}} \nabla e^{(t-s)\Delta}\cdot  (\Theta_1 (s) \nabla^\bot \Theta_1 (s+\sigma^2\lambda)) d\lambda d\sigma\\
	&=
	\frac{\sqrt{\pi}M_0}{2\pi} \sum_{i,j=1}^2 M_2^{ij} \int_0^\infty \int_0^\infty \lambda^{-3/2} e^{-\frac1{4\lambda}} \nabla e^{(t-s)\Delta}\cdot \bigl( \partial_i \partial_j G (s) \nabla^\bot G (s+\sigma^2\lambda)\\
	 &\hspace{15mm} 
    - \partial_i \partial_j G (s+\sigma^2\lambda) \nabla^\bot G (s) \bigr) d\lambda d\sigma\\
	&+
	\frac{\sqrt{\pi}}{2\pi} \sum_{i,j=1}^2 M_1^i M_1^j \int_0^\infty \int_0^\infty \lambda^{-3/2} e^{-\frac1{4\lambda}} \nabla e^{(t-s)\Delta}\cdot  (\partial_i G (s) \nabla^\bot \partial_j G (s+\sigma^2\lambda)) d\lambda d\sigma.
\end{aligned}
\end{equation}
We calculate these summations by dividing each into partial sums.
Since
\[
\begin{aligned}
	&\sum_{i=1}^2 M_2^{ii} \left( \partial_i^2 G(s) \nabla^\bot G(s+\sigma^2\lambda) - \partial_i^2 G(s+\sigma^2\lambda) \nabla^\bot G(s) \right)\\
	&=
	- \frac{\sigma^2\lambda}{32\pi s^2 (s+\sigma^2\lambda)^2 (2s+\sigma^2\lambda)} \sum_{i=1}^2 M_2^{ii} y_i^2 y^\bot G \left( s - \frac{s^2}{2s+\sigma^2\lambda} \right),
\end{aligned}
\]
we see that
\[
\begin{aligned}
	&\sum_{i=1}^2 M_2^{ii} \nabla e^{(t-s)\Delta}\cdot  \left( \partial_i^2 G(s) \nabla^\bot G(s+\sigma^2\lambda) - \partial_i^2 G(s+\sigma^2\lambda) \nabla^\bot G(s) \right)\\
	&=
	- \frac{\sigma^2\lambda}{32\pi s^2 (s+\sigma^2\lambda)^2 (2s+\sigma^2\lambda)} \sum_{i=1}^2 M_2^{ii} \int_{\mathbb{R}^2}
		\nabla G(t-s,x-y) \cdot y_i^2 y^\bot G\left( s - \frac{s^2}{2s+\sigma^2\lambda},y \right)
	dy\\
	&=
	- \frac{\sigma^2\lambda}{64\pi^2 s^2 (s+\sigma^2\lambda)^2 (2s+\sigma^2\lambda)} \sum_{i=1}^2 M_2^{ii}
	\mathcal{F}^{-1} \left[ e^{-(t-s)|\xi|^2} (\xi\cdot \nabla^\bot) \partial_i^2 e^{-( s - \frac{s^2}{2s+\sigma^2\lambda})|\xi|^2} \right]\\
	&=
	\frac{(M_2^{22} - M_2^{11})\sigma^2\lambda}{8\pi^2 (2s+\sigma^2\lambda)^3}
	\mathcal{F}^{-1} \left[ \xi_1 \xi_2 e^{-(t-\frac{s^2}{2s+\sigma^2\lambda}) |\xi|^2} \right]
	=
	\frac{(M_2^{11} - M_2^{22})\sigma^2\lambda}{4\pi (2s+\sigma^2\lambda)^3} \partial_1 \partial_2 G \left( t-\frac{s^2}{2s+\sigma^2\lambda} \right)\\
	&= \frac{(M_2^{11}-M_2^{22})}{4\pi} \sum_{k=0}^\infty \frac{\sigma^2\lambda s^{2k}}{(2s+\sigma^2\lambda)^{k+3}}
	\frac{\partial_1 \partial_2 (-\Delta)^k G (t)}{k!}.
\end{aligned}
\]
Here, we expanded $\partial_1\partial_2 G(t-\frac{s^2}{2s+\sigma^2\lambda})$ around $-\frac{s^2}{2s+\sigma^2\lambda} = 0$.
Remark that $t-\frac{s^2}{2s+\sigma^2\lambda} > \frac{t}2 \gg 0$ for $(s,\lambda,\sigma) \in (0,t) \times (0,\infty)^2$.
Similarly,
\[
\begin{aligned}
	&\sum_{i\neq j} M_2^{ij} \nabla e^{(t-s)\Delta}\cdot  \left(\partial_i \partial_j G(s) \nabla^\bot G(s+\sigma^2\lambda) - \partial_i \partial_j G(s+\sigma^2\lambda) \nabla^\bot G(s) \right)\\
	&=
	(M_2^{12} + M_2^{21}) \nabla e^{(t-s)\Delta}\cdot  \left( \partial_1 \partial_2 G(s) \nabla^\bot G(s+\sigma^2\lambda) - \partial_1 \partial_2 G(s+\sigma^2\lambda) \nabla^\bot G(s) \right)\\
	&=
	- \frac{(M_2^{12}+M_2^{21}) \sigma^2\lambda}{32\pi s^2 (s+\sigma^2\lambda)^2 (2s+\sigma^2\lambda)} \int_{\mathbb{R}^2}
		y^\bot\cdot\nabla G(t-s,x-y) y_1 y_2 G \left( s - \frac{s^2}{2s+\sigma^2\lambda}, y \right)
	dy
\end{aligned}
\]
\[
\begin{aligned}
	&=
	- \frac{(M_2^{12} + M_2^{21}) \sigma^2\lambda}{64\pi^2 s^2 (s+\sigma^2\lambda)^2 (2s+\sigma^2\lambda)} \mathcal{F}^{-1} \left[
		e^{-(t-s)|\xi|^2} (\xi\cdot\nabla^\bot) \partial_1 \partial_2 e^{-(s-\frac{s^2}{2s+\sigma^2\lambda})|\xi|^2} \right]\\
	&=
	\frac{(M_2^{12} + M_2^{21}) \sigma^2\lambda}{16\pi^2 (2s+\sigma^2\lambda)^3} \mathcal{F}^{-1} \left[ (\xi_1^2-\xi_2^2) e^{-(t-\frac{s^2}{2s+\sigma^2\lambda}) |\xi|^2} \right]
	=
	\frac{(M_2^{12} + M_2^{21}) \sigma^2\lambda}{8\pi (2s+\sigma^2\lambda)^3} (\partial_2^2 - \partial_1^2) G \left( t-\frac{s^2}{2s+\sigma^2\lambda} \right)\\
	&=
	\frac{M_2^{12}+M_2^{21}}{8\pi} \sum_{k=0}^\infty \frac{\sigma^2 \lambda s^{2k}}{(2s+\sigma^2\lambda)^{k+3}}
	\frac{(\partial_2^2 - \partial_1^2) (-\Delta)^k G(t)}{k!}.
\end{aligned}
\]
Hence, by using elementary,
we get
\[
\begin{aligned}
	&\sum_{i,j=1}^2 M_2^{ij} \int_0^\infty \int_0^\infty \lambda^{-3/2} e^{-\frac1{4\lambda}} \nabla e^{(t-s)\Delta}\cdot \bigl( \partial_i \partial_j G (s) \nabla^\bot G (s+\sigma^2\lambda)
    - \partial_i \partial_j G (s+\sigma^2\lambda) \nabla^\bot G (s) \bigr) d\lambda d\sigma\\
	&= \frac{M_2^{11}-M_2^{22}}{16} \sum_{k=0}^\infty \frac{(2k+1)!!}{(2k+4)!!} \left( \frac{s}2 \right)^{k-3/2} \frac{\partial_1\partial_2 (-\Delta)^k G(t)}{k!}\\
	&+ \frac{M_2^{12} + M_2^{21}}{32} \sum_{k=0}^\infty \frac{(2k+1)!!}{(2k+4)!!} \left( \frac{s}2 \right)^{k-3/2} \frac{(\partial_2^2-\partial_1^2) (-\Delta)^k G(t)}{k!}
\end{aligned}
\]
for the first part of \eqref{J3-1}.
We treat another summation.
For the first term, we see that
\[
\begin{aligned}
	&(M_1^1)^2 \nabla e^{(t-s)\Delta}\cdot  (\partial_1 G(s) \nabla^\bot \partial_1 G(s+\sigma^2\lambda))
	=
	- (M_1^1)^2 \nabla e^{(t-s)\Delta}\cdot  (\nabla^\bot \partial_1 G(s) \partial_1 G(s+\sigma^2\lambda))\\
	&=
	(M_1^1)^2 \left( \partial_1 e^{(t-s)\Delta} (\partial_2\partial_1 G(s) \partial_1 G(s+\sigma^2\lambda)) - \partial_2 e^{(t-s)\Delta} (\partial_1^2 G(s) \partial_1 G(s+\sigma^2\lambda)) \right)\\
	&=
	- \frac{(M_1^1)^2}{32\pi s^2 (s+\sigma^2\lambda)(2s+\sigma^2\lambda)} \int_{\mathbb{R}^2}
		\partial_1 G(t-s,x-y) y_1^2 y_2 G \left( s - \frac{s^2}{2s+\sigma^2\lambda}, y \right)
	dy\\
	&- \frac{(M_1^1)^2}{16\pi s (s+\sigma^2\lambda)(2s+\sigma^2\lambda)} \int_{\mathbb{R}^2}
		\partial_2 G(t-s,x-y) \left( 1 - \frac{y_1^2}{2s} \right) y_1 G\left( s - \frac{s^2}{2s+\sigma^2\lambda}, y \right)
	dy.
\end{aligned}
\]
Here,
\[
\begin{aligned}
	&\int_{\mathbb{R}^2}
		\partial_1 G(t-s,x-y) y_1^2 y_2 G \left( s - \frac{s^2}{2s+\sigma^2\lambda}, y \right)
	dy\\
	&=
	\frac{2 s^2(s+\sigma^2\lambda)^2}{\pi (2s+\sigma^2\lambda)^2} \mathcal{F}^{-1} \left[
		\left( 1 - \frac{2s(s+\sigma^2\lambda)}{2s+\sigma^2\lambda} \xi_1^2 \right) \xi_1 \xi_2 e^{-(t-\frac{s^2}{2s+\sigma^2 \lambda})|\xi|^2} \right]
\end{aligned}
\]
and
\[
\begin{aligned}
	&\int_{\mathbb{R}^2}
		\partial_2 G(t-s,x-y) \left( 1 - \frac{y_1^2}{2s} \right) y_1 G\left( s - \frac{s^2}{2s+\sigma^2\lambda}, y \right)
	dy\\
	&= - \frac{s (s+\sigma^2\lambda)}{\pi (2s+\sigma^2\lambda)} \mathcal{F}^{-1} \left[
		\left( -1 + \frac{3(s+\sigma^2\lambda)}{2s+\sigma^2\lambda} - \frac{2s(s+\sigma^2\lambda)^2}{(2s+\sigma^2\lambda)^2} \xi_1^2 \right) \xi_1 \xi_2 e^{-(t-\frac{s^2}{2s+\sigma^2 \lambda})|\xi|^2 } \right],
\end{aligned}
\]
and then
\[
\begin{aligned}
	&(M_1^1)^2 \nabla e^{(t-s)\Delta}\cdot  (\partial_1 G(s) \nabla^\bot \partial_1 G(s+\sigma^2\lambda))
	=
	\frac{(M_1^1)^2 \sigma^2\lambda}{16\pi^2 (2s+\sigma^2\lambda)^3} \mathcal{F}^{-1} \left[ \xi_1 \xi_2 e^{-(t-\frac{s^2}{2s+\sigma^2\lambda})|\xi|^2} \right]\\
	&= -\frac{(M_1^1)^2 \sigma^2\lambda}{8\pi (2s+\sigma^2\lambda)^3} \partial_1 \partial_2 G \left( t-\frac{s^2}{2s+\sigma^2\lambda} \right)
	=
	-\frac{(M_1^1)^2}{8\pi} \sum_{k=0}^\infty \frac{\sigma^2\lambda s^{2k}}{(2s+\sigma^2\lambda)^{k+3}}
	\frac{\partial_1\partial_2 (-\Delta)^k G(t)}{k!}.
\end{aligned}
\]
Same procedure gives
\[
\begin{aligned}
	&(M_1^2)^2 \nabla e^{(t-s)\Delta}\cdot  (\partial_2 G(s) \nabla^\bot \partial_2 G(s+\sigma^2\lambda))\\
	&= (M_1^2)^2 \left( \partial_1 e^{(t-s)\Delta} (\partial_2^2 G(s) \partial_2 G(s+\sigma^2\lambda) - \partial_2 e^{(t-s)\Delta} (\partial_1\partial_2 G(s) \partial_2 G(s+\sigma^2\lambda)) \right)\\
	&= \frac{(M_1^2)^2 \sigma^2\lambda}{8\pi (2s+\sigma^2\lambda)^3} \partial_2 \partial_1 G \left( t-\frac{s^2}{2s+\sigma^2\lambda} \right)
	= \frac{(M_1^2)^2}{8\pi} \sum_{k=0}^\infty \frac{\sigma^2\lambda s^{2k}}{(2s+\sigma^2\lambda)^{k+3}}
	\frac{\partial_2\partial_1 (-\Delta)^k G(t)}{k!}.
\end{aligned}
\]
The last remaining terms are treated in the same way such as
\[
\begin{aligned}
	&M_1^1 M_1^2 \nabla e^{(t-s)\Delta}\cdot  \left( \partial_1 G(s) \nabla^\bot \partial_2 G (s+\sigma^2\lambda)
	+ \partial_2 G(s) \nabla^\bot \partial_1 G (s+\sigma^2\lambda) \right)\\
	&= - \frac{M_1^1 M_1^2}{16\pi s (s+\sigma^2\lambda) (2s+\sigma^2\lambda)} \int_{\mathbb{R}^2}
		\partial_1 G(t-s,x-y) y_1 \left( 1 - \frac{y_2^2}{s+\sigma^2\lambda} \right) G\left( s - \frac{s^2}{2s+\sigma^2\lambda},y \right) dy\\
	&+ \frac{M_1^1 M_1^2}{16\pi s (s+\sigma^2\lambda) (2s+\sigma^2\lambda)} \int_{\mathbb{R}^2}
		\partial_2 G(t-s,x-y) y_2 \left( 1 - \frac{y_1^2}{s+\sigma^2\lambda} \right) G\left( s - \frac{s^2}{2s+\sigma^2\lambda},y \right) dy\\
	&=
	\frac{M_1^1 M_1^2 \sigma^2 \lambda}{16 \pi^2 (2s+\sigma^2\lambda)^3} \mathcal{F}^{-1} \left[  (\xi_2^2-\xi_1^2) e^{-(t-\frac{s^2}{2s+\sigma^2\lambda}) |\xi|^2} \right]
	=
	-\frac{M_1^1 M_1^2 \sigma^2 \lambda}{8 \pi (2s+\sigma^2\lambda)^3} ( \partial_2^2 - \partial_1^2) G \left( t-\frac{s^2}{2s+\sigma^2\lambda} \right)\\
	&=
	-\frac{M_1^1 M_1^2}{8\pi} \sum_{k=0}^\infty \frac{\sigma^2\lambda s^{2k}}{(2s+\sigma^2\lambda)^{k+3}}
	\frac{(\partial_2^2 - \partial_1^2) (-\Delta)^k G(t)}{k!}.
\end{aligned}
\]
Thus, for the latter part of \eqref{J3-1},
\[
\begin{aligned}
	&\sum_{i,j=1}^2 M_1^i M_1^j \int_0^\infty \int_0^\infty \lambda^{-3/2} e^{-\frac1{4\lambda}} \nabla e^{(t-s)\Delta}\cdot  (\partial_i G (s) \nabla^\bot \partial_j G (s+\sigma^2\lambda)) d\lambda d\sigma\\
	&= \frac{(M_1^2)^2 - (M_1^1)^2}{32} \sum_{k=0}^\infty  \frac{(2k+1)!!}{k!(2k+4)!!} \left( \frac{s}2 \right)^{k-3/2} \partial_1\partial_2 (-\Delta)^k G(t)\\
	&- \frac{M_1^1 M_1^2}{32} \sum_{k=0}^\infty \frac{(2k+1)!!}{(2k+4)!!} \left( \frac{s}2 \right)^{k-3/2} \frac{(\partial_2^2-\partial_1^2)(-\Delta)^k G(t)}{k!}.
\end{aligned}
\]
Summing up them into \eqref{J3-1} provides that
\begin{equation}\label{J3bs1}
\begin{aligned}
	&\sum_{m_1+m_2=2} \nabla e^{(t-s)\Delta}\cdot  (\Theta_{m_1} \mathcal{R}^\bot \Theta_{m_2}) (s)\\
	&= \frac{\sqrt{\pi}(2M_0(M_2^{11}-M_2^{22})+(M_1^2)^2 - (M_1^1)^2)}{64\pi} \sum_{k=0}^\infty \frac{(2k+1)!!}{(2k+4)!!} \left( \frac{s}2 \right)^{k-3/2} \frac{\partial_1\partial_2 (-\Delta)^k G(t)}{k!}\\
	&+ \frac{\sqrt{\pi}(M_0(M_2^{12} + M_2^{21}) - M_1^1M_1^2)}{64\pi} \sum_{k=0}^\infty \frac{(2k+1)!!}{(2k+4)!!} \left( \frac{s}2 \right)^{k-3/2} \frac{(\partial_2^2-\partial_1^2) (-\Delta)^k G(t)}{k!}.
\end{aligned}
\end{equation}
Now we employ Lemma \ref{lem-discuss}.
By virtue of the scales, we can choose $t = 1$.
We put $g(s) = s^{-3/2} h + E(s)$ for
\[
\begin{aligned}
	&g(s) = -\sum_{m_1+m_2=2} \nabla e^{(1-s)\Delta} \cdot (\Theta_{m_1} \mathcal{R}^\bot \Theta_{m_2}) (s),\\
	&h =  \frac{\sqrt{2\pi}(2M_0(M_2^{22}-M_2^{11})+(M_1^1)^2 - (M_1^2)^2)}{256\pi} \partial_1\partial_2 G(1)\\
	&+ \frac{\sqrt{2\pi}(M_0(M_2^{12} + M_2^{21}) - M_1^1M_1^2)}{256\pi} (\partial_1^2-\partial_2^2) G(1)
\end{aligned}
\]
and
\[
\begin{aligned}
	&E (s)
	= \frac{\sqrt{\pi}(2M_0(M_2^{22}-M_2^{11})+(M_1^1)^2 - (M_1^2)^2)}{64\pi} \sum_{k=1}^\infty \frac{(2k+1)!!}{(2k+4)!!} \left( \frac{s}2 \right)^{k-3/2} \frac{\partial_1\partial_2 (-\Delta)^k G(1)}{k!}\\
	&+ \frac{\sqrt{\pi}(M_0(M_2^{12} + M_2^{21}) - M_1^1M_1^2)}{64\pi} \sum_{k=1}^\infty \frac{(2k+1)!!}{(2k+4)!!} \left( \frac{s}2 \right)^{k-3/2} \frac{(\partial_1^2-\partial_2^2) (-\Delta)^k G(1)}{k!}.
\end{aligned}
\]
On the other hand the integrand of $J_3^{\mathrm{H}}$ of form \eqref{J3bs} is rearranged as
\[
\begin{aligned}
	&- \sum_{m_1+m_2 = 2} \int_{\mathbb{R}^2}
		\biggl( \nabla G(1-s,x-y) + \sum_{|\beta|=1} \nabla^\beta \nabla G(1,x) y^\beta \biggr)
		\cdot (\Theta_{m_1}\mathcal{R}^\bot\Theta_{m_2}) (s,y)
	dy = g(s) - s^{-3/2} f
\end{aligned}
\]
for
\[
\begin{aligned}
	f= \sum_{m_1+m_2=2} \sum_{|\beta|=1} \nabla^\beta \nabla G(1) \cdot \int_{\mathbb{R}^2}
		y^\beta (\Theta_{m_1} \mathcal{R}^\bot \Theta_{m_2}) (1,y)
	dy,
\end{aligned}
\]
and then, we obtain that
\begin{equation}\label{kakera}
\begin{aligned}
	&\sum_{m_1+m_2=2} \sum_{|\beta| = 1} \nabla^\beta \nabla G(t) \cdot \int_{\mathbb{R}^2} y^\beta (\Theta_{m_1} \mathcal{R}^\bot \Theta_{m_2}) (s,y) dy\\
	&= \frac{\sqrt{2\pi} \left( M_0 (M_2^{12}+M_2^{21}) - M_1^1 M_1^2\right)}{256\pi} s^{-3/2} (\partial_1^2 - \partial_2^2) G(t)\\
	&+ \frac{\sqrt{2\pi}\left( 2M_0 (M_2^{22}-M_2^{11})+(M_1^1)^2-(M_1^2)^2\right)}{256\pi} s^{-3/2} \partial_1 \partial_2 G(t).
\end{aligned}
\end{equation}
Therefore, we get $J_3^{\mathrm{L}}$ of \eqref{J3pre} from \eqref{J3L}.
Of course, \eqref{kakera} could also be confirmed by the $L^2$-theory.
For $J_3^{\mathrm{H}}$ of the form \eqref{J3bs}, we conclude from \eqref{J3bs1} and \eqref{kakera} that
\[
\begin{aligned}
	&J_3^{\mathrm{H}} (t) = \frac{\sqrt{\pi}(2M_0(M_2^{22}-M_2^{11})+(M_1^1)^2 - (M_1^2)^2)}{64\pi} \sum_{k=1}^\infty \frac{(2k+1)!!}{(2k+4)!!} \frac{\partial_1\partial_2 (-\Delta)^k G(t)}{k!}
	\int_0^t \left( \frac{s}2 \right)^{k-3/2} ds\\
	&+ \frac{\sqrt{\pi}(M_0(M_2^{12} + M_2^{21}) + M_1^1M_1^2)}{64\pi} \sum_{k=1}^\infty \frac{(2k+1)!!}{(2k+4)!!} \frac{(\partial_1^2-\partial_2^2) (-\Delta)^k G(t)}{k!}
	\int_0^t \left( \frac{s}2 \right)^{k-3/2} ds
\end{aligned}
\]
and then we see the form \eqref{J3}.
If Taylor's theorem is not applied at each stage, the given form \eqref{J3pre} is obtained.
\begin{remark}\label{rem31}
Our main result is derived from renormalizing $\theta \mathcal{R}^\bot\theta \sim \sum_{m_1 + m_2 =0}^2 \Theta_{m_1} \mathcal{R}^\bot \Theta_{m_2}$.
In this procedure, the pseudo-logarithms
\[
	\int_0^t \int_{\mathbb{R}^2} y^\beta (\Theta_{m_1} \mathcal{R}^\bot \Theta_{m_2}) (1+s,y) dy ds
	= \int_0^t (1+s)^{-1} ds \int_{\mathbb{R}^2} y^\beta (\Theta_{m_1} \mathcal{R}^\bot \Theta_{m_2}) (1,y) dy
\]
for $|\beta| = m_1 + m_2$ vanish since the parity of $\Theta_{m_1} \mathcal{R}^\bot \Theta_{m_2}$ is determined by $m_1 + m_2$ and then $y^\beta (\Theta_{m_1} \mathcal{R}^\bot \Theta_{m_2})$ should be odd-type.
If we further renormalize as $\theta \mathcal{R}^\bot\theta \sim \sum_{m_1 + m_2 =0}^3 \Theta_{m_1} \mathcal{R}^\bot \Theta_{m_2} + J_3 \mathcal{R}^\bot \Theta_0 + \Theta_0 \mathcal{R}^\bot J_3$, then
\[
	\int_0^t \int_{\mathbb{R}^2} y^\beta (J_3 \mathcal{R}^\bot \Theta_0 + \Theta_0 \mathcal{R}^\bot J_3)(1+s,y) dyds
	=
	\int_0^t (1+s)^{-1} ds \int_{\mathbb{R}^2} y^\beta (J_3 \mathcal{R}^\bot \Theta_0 + \Theta_0 \mathcal{R}^\bot J_3)(1,y) dy
\]
for $|\beta| = 3$ contains both parities.
Therefore, we can expect that there is some profile $K_4$ satisfying $\lambda^{2+4} K_4 (\lambda^2 t, \lambda x) = K_4 (t,x)$ for $\lambda > 0$ and
\[
	\theta (t) \sim \Theta_0 (t) + \Theta_1 (t) + \Theta_2 (t) + \Theta_3 (t) + J_3 (t) + K_4 (t) \log t
\]
as $t \to +\infty$.
Precisely,
\[
	K_4 (t) = \sum_{2l+|\beta|=3} \frac{\partial_t^l \nabla^\beta \nabla G(t)}{\beta!} \cdot \int_{\mathbb{R}^2} (-1)^l y^\beta (J_3 \mathcal{R}^\bot \Theta_0 + \Theta_0 \mathcal{R}^\bot J_3)(1,y) dy.
\]
If this expectation is true, then the second assertion of Theorem \ref{thm} is optimal.
Of course, the actual calculation might lead to $K_4 = 0$.
\end{remark}
\begin{remark}\label{rem33}
Readers notice that $J_3 = J_3^{\mathrm{L}} + J_3^{\mathrm{H}}$ originally given by \eqref{J3L} and \eqref{J3H} is formally transformed into
\[
\begin{aligned}
	&J_3 (t) = - \sum_{m_1+m_2 = 2} \int_0^t \nabla e^{(t-s)\Delta}\cdot (\Theta_{m_1} \mathcal{R}^\bot \Theta_{m_2}) (s) ds\\
	&+ \sum_{m_1+m_2=2} \sum_{|\beta|=0}^1 \nabla^\beta \nabla G(t) \cdot \int_0^\infty \int_{\mathbb{R}^2} (-y)^\beta (\Theta_{m_1} \mathcal{R}^\bot \Theta_{m_2}) (s,y) dyds.
\end{aligned}
\]
However, they will also notice that convergence of this term-by-term integral is not guaranteed.
\end{remark}

\end{document}